\titlerunning{When you come at the king you best not miss} %TODO optional, please use if title is longer than one line
\title{When you come at the king you best not miss}
\author{Oded~Lachish}%
{Birkbeck College, University of London, United Kingdom}%
{o.lachish@bbk.ac.uk}%
{https://orcid.org/0000-0001-5406-8121}% ORCID
{}
\author{Felix~Reidl}%
{Birkbeck College, University of London, United Kingdom}%
{f.reidl@bbk.ac.uk}%
{https://orcid.org/0000-0002-2354-3003}%
{}
\author{Chhaya Trehan}%
{London School of Economics and Political Science, United Kingdom}%
{c.trehan@lse.ac.uk}%
{https://orcid.org/0000-0002-3249-3212}% ORCID
{}
\authorrunning{O. Lachish and F. Reidl and C. Trehan} 
\keywords{Digraphs, tournaments, kings, query complexity} %TODO mandatory; please add comma-separated list of keywords
\def\polylog{\operatorname{polylog}}
\def\dir#1{\vec{#1}}
\renewcommand{\deg}{d}
\newcounter{propcounter}
\definecolor{amaranth}{rgb}{0.9, 0.17, 0.31}
\renewcommand{\etal}{\xperiodafter{\emph{et~al}}}
\def\Erdos{Erd\H{o}s\xspace}
\begin{document}

\maketitle
\begin{abstract}
    A tournament is an orientation of a complete graph.
	We say that a vertex~$x$ in a tournament~$\dir T$ \emph{controls} another
	vertex~$y$ if there exists a directed path of length at most two from~$x$ to~$y$.
	A vertex is called a \emph{king} if it controls every vertex of the tournament.
	It is well known that every tournament has a king.
	We follow Shen, Sheng, and Wu~\cite{ShenSW2003} in investigating the \emph{query complexity} of finding a king, that is, the number of arcs in~$\dir T$ one has to know in order to surely identify at least one vertex as a king.

	The aforementioned authors showed that one always has to query at least
	$\Omega(n^{4/3})$ arcs and provided a strategy that queries at most~$O(n^{3/2})$. While this upper bound has not yet been improved for the original problem,
	Biswas \etal~\cite{BiswasRaman2017} proved that with~$O(n^{4/3})$ queries
	one can identify a \emph{semi}-king, meaning a vertex which controls at
	least half of all vertices.

	Our contribution is a novel strategy which improves upon the number of controlled
	vertices: using~$O(n^{4/3} \polylog n)$ queries, we can identify a $(\frac{1}{2}+\frac{2}{17})$-king.
	To achieve this goal we use a novel structural result for tournaments.
\end{abstract}

\section{Introduction and Related Work}\label{sec:Intro}
A \emph{tournament} is a directed graph in which there is exactly one directed edge between every pair of vertices.
Due to their usefulness in modelling many real world scenarios such as game tournaments, voting strategies and many more, tournaments are
a very well studies concept in structural as well as algorithmic graph theory.
The early monograph of Moon~\cite{moon2015topics} has been followed by extensive research on the topic.
For example, Dey~\cite{dey2017query} studied the identification of the `best subset of vertices'  in a tournament
motivated by the high cost of comparing a pair of drugs for a specific disease.
Goyal \etal.~\cite{goyal2020elusiveness} studied the identification of vertices with specific in- or out-degrees.

 In this work we investigate the \emph{query complexity} of finding a \emph{king} in a tournament graph, that is, a vertex from which we can reach every other vertex of the tournament via a directed path of length at most two.
 It is well known that every tournament has such a vertex.
 
The study of \emph{query complexity} problems in tournaments has the following general shape: Initially, we are only given the vertex set of the tournament while the directions of its arcs are hidden from us. 
For each pair of vertices $u$, $v$ we can, at unit cost, learn whether the arc $uv$ or $vu$ is in the tournament. Our goal is to use the fewest possible queries in order to reveal some combinatorial object in the tournament. 
The motivation for our paper is found in Shen, Sheng, and Wu's work~\cite{ShenSW2003} on the query complexity of identifying a king. They showed that
$\Omega(n^{4/3})$ queries are always necessary and provided an
algorithm which reveals a king using~$O(n^{3/2})$ queries. 
Ajtai \etal.~\cite{SortingAjtai2009} independently proved the same upper bound in the context of imprecise comparison.

One of the enticing aspects of this setting is its game-theoretic nature: we can alternatively
think of it as an adversarial game where one player, the \emph{seeker}, wants to identify
a combinatorial structure by querying arcs of the tournament while an adversary, the \emph{obscurer},
tries to delay the seeker for as long as possible by choosing
the orientation of queried arcs. 

When reading Shen, Sheng, and Wu~\cite{ShenSW2003}, one may be tempted to conjecture
that a better analysis of their obscurer-strategy for finding a king can lead to a better lower bound.
However, Biswas \etal.~\cite{BiswasRaman2017} showed that against this strategy, the seeker can find a king with $O(n^{4/3})$ queries. 
They also showed that there exists a seeker strategy with $O(n^{4/3})$ queries for identifying
 a \emph{semi-king}, that is, a vertex which controls at least half of all
 vertices. 
This result is optimal by Lemma 6, Biswas \etal~\cite{BiswasRaman2017}.
In fact, one needs to make $\Omega(t^{4/3})$ queries for identifying a vertex which controls at least $t \le n$ vertices against the obscurer-strategy of Shen, Sheng and Wu. (See Lemma 6 of Biswas \etal~\cite{BiswasRaman2017} and Ajtai \etal~\cite{SortingAjtai2009}  for more details.)
Therefore, if there exists an obscurer-strategy that proves a stronger than $\Omega(n^{4/3})$ lower bound for the king problem, then this strategy must rely on some
factors which distinguish the king problem from the semi-king problem. In our eyes, this means that such a lower bound is much more difficult to find than one might think at first.

Proceeding from the above, it is tempting to try to improve the upper bound by using a variation of the seeker-strategy from Shen, Sheng, and Wu~\cite{ShenSW2003} and we can interpret the Biswas \etal~\cite{BiswasRaman2017}'s seeker-strategy for finding a semi-king as such an attempt.
These strategies both rely on repeatedly selecting a set of vertices and then querying all the edges between them to find a maximum out-degree (MOD) vertex in this sub-tournament\footnote{% 
The relationship between MOD vertices and kings is well-established:
Landau~\cite{Landau1953}, while studying the structure of animal societies, 
showed that every MOD vertex is a king, but non-MOD kings can exist as well.
}.
Balasubramanian, Raman and Srinivasaragavan~\cite{balasubramanian1997finding} showed that identifying an MOD vertex in a tournament of size $k$ requires
$\Omega(k^2)$ queries in the worst case,
which may explain the limits of the existing seeker strategies.

\smallskip%
\noindent%
\textbf{Our Result}

\noindent
 In this paper, we proceed along the line of research just described.
On the one hand, we show that with $\tilde O(n^{4/3})$ queries\footnote{The big-$\tilde O$ notation
	hides constants and polylogarithmic factors}, it is possible to identify
a $(\frac{1}{2}+\frac{2}{17})$-king, which indicates that improving upon the $\Omega(n^{4/3})$ lower bound is probably even harder than indicated by the semi-king results.
On the other hand, our technique does not
rely on finding MOD vertices of sub-tournaments which circumvents the
inherent high cost of this operation.

\noindent \textbf{Technical Overview}

\noindent
Our result is based on the combinatorial structure of tournaments, which may be of independent interest.
We believe that this paper provides a novel toolkit which could lead towards resolving
the query complexity of finding a king. Specifically, we design a seeker-strategy which consists of two main stages: 
\begin{enumerate}[(i)]
	\item The seeker queries the orientation of a set of edges defined by a so-called \emph{template-graph}. 
	These queries are \emph{non-adaptive} in the sense that the queries do not change as a result
	of the answers provided by the obscurer.
	\item The seeker analyses the answer to the queries of (i) in order to select queries that lead to the revelation of a $(\frac{1}{2}+\frac{2}{17})$-king.
\end{enumerate}

\noindent
The template-graph is an undirected graph over the tournament's vertices that has $\tilde O(n^{4/3})$ edges, with the property that every set of vertices of size around $n^{2/3}$ or more has edges to almost all the graph.
In Section~\ref{sec:templategraph}, we use the probabilistic method to prove that such a graph exists.
Given the template-graph, the seeker's queries in the first stage are simply given by its edges, \ie if there exists an edge $uv$ in the template-graph, then the seeker asks the obscurer about the orientation of the edge $uv$ in the tournament.
The sparsity of the template-graph ensures that the seeker does not make too many queries and the connectivity of every sufficiently large set ensures that 
we do not miss any relevant information.

The second stage of the seeker-strategy is built on showing that when the obscurer chooses how the edges of the template graph are oriented they have a trade-off.
The trade-off is either to reveal an \textit{ultra-set} or not.
We show that if the obscurer reveals an \textit{ultra-set}, then the seeker can reveals a $(\frac{1}{2}+\frac{2}{17})$-king with $\tilde O(n^{4/3})$ extra queries.
If the obscurer does not do this, then the seeker can use this to find a partition of the vertex set of the tournament into sets of size $O(n^{2/3})$ each (which we refer to as \emph{tiles}), so that the edges of the template-graph that are incident to the tiles satisfy a certain property.
We obtain this combinatorial object by showing that if such a partition does not exist, then a simple set of queries already reveals a $(\frac{1}{2}+\frac{2}{17})$-king.

The tiles are analysed by the construction of what we refer to as the \emph{free matrix} which contains a row for every tile and a column for every vertex of the tournament.
An entry of the matrix indexed by a given tile-vertex pair is~$1$ if every edge between the vertex and a vertex in the tile is directed towards the tile, otherwise the entry is~$0$. We then use this free matrix to guide the seeker-strategy.

Given that the first part of the seeker-strategy is non-adaptive and against any adversary, this approach also reveals a combinatorial property of tournaments: 
for any fixed tournament and template graph with the same set of vertices, knowing only the direction of the arcs of the tournament that correspond to the arcs of the template graph is sufficient for finding a set of vertices $S$ of size $O(n^{2/3})$ such that querying all edges inside of $S$ necessarily reveals a $(\frac{1}{2}+\frac{2}{17})$-king. 
We note that fraction $\frac{2}{17}$ is the result of balancing the various trade-offs in the seeker strategy.

The rest of the paper is organised as follows.
In Section~\ref{sec:Prelim}, we provide necessary definitions and prove some basic lemmas about tournaments that are used in the rest of the paper.
Section~\ref{sec:templategraph} is dedicated to the formal definition and the proof of existence of template graphs.
In Section~\ref{sec:query}, we describe our seeker-strategy and prove that it leads to the discovery of a $(\frac{1}{2} + \frac{2}{17})$-king.
In Section~\ref{sec:conclusion}, we give concluding remarks and open problems.

\section{Preliminaries}\label{sec:Prelim}

For simplicity, we assume that the vertices of any $n$-vertex graph are the
numbers $[n] := \{1,\ldots,n\}$. 

An orientation of a graph $G$ is a directed graph obtained from $G$ by replacing every one of its edges by a directed arc.

\marginnote{$d(v,X)$,$N(v)$}
Given an undirected graph $G$, a vertex~$v \in V(G)$, and a
vertex set $X \subseteq V(G)$ we define the \emph{relative degree} $\deg(v,X) := |N(v) \cap X|$, where $N(v)$ is the neighbourhood of $v$ in $G$.

\marginnote{$d^+(X)$,$N^+(v)$}
For a vertex $v$ in a directed graph~$\dir G$, a vertex $u$ in $\dir G$ is an out-neighbour of $v$, 
if the edge between $u$ and $v$ is oriented from $v$ to $u$.
For a directed graph~$\dir G$, we denote the out-neighbourhood of a vertex
$v \in \dir G$ by $N^+(v)$ and its out-degree by~$\deg^+(v)$. 
For a vertex set $X \subseteq V(\dir G)$, we let $d^+(X)$ 
be the number of arcs from a vertex in $X$ to a vertex not in $X$. 
\marginnote{$\deg^+(v,X)$}
Given additionally a vertex subset~$X \subseteq V(G)$, we define the \emph{relative out-degree} $\deg^+(v,X) := |N^+(v) \cap X|$.

\marginnote{$N^{++}[v]$}
The (closed) \emph{second-out-neighbourhood} $N^{++}[v]$ of $v$ 
 is the set of vertices $u \in V(\dir G)$ for which there
exists a directed path from~$v$ to~$u$ of length at most two.

\marginnote{control, direct --}
For simplicity we will adopt the following vocabulary for digraphs. We say
that a vertex~$x$ \emph{controls} a vertex~$y$ if $y \in N^{++}[x]$. We say that~$x$ \emph{directly controls} a vertex~$y$ if
$y \in N^+(x) \cup \{x\}$. We extend both of these terms to vertex sets~$U$,
for example, we will often write statements like `$x$ controls at least half of the vertices in~$U$'.

\marginnote{$\dir T$, $\dir T[S]$}
A tournament is a digraph~$\dir T$ obtained from a complete graph by replacing each edge with a directed arc. As done usually, we denote the subgraph induced by a vertex set~$S \subseteq V(\dir T)$,
with~$\dir T[S]$.
Note that an induced subgraph of a tournament is necessarily also a tournament. 
We will need the following basics facts about tournaments in the following.

\begin{lemma}\label{lemma:HighOutDegreeK}
	Let $\dir T$ be a tournament with $m$ vertices and $\alpha \in [0,1]$ such that $\alpha m$ is even.
	Then $\dir T$ has at least $(1-\alpha)m$ vertices of out-degree at least $\alpha m/2$.
\end{lemma}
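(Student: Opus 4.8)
The plan is a short counting argument by contradiction. The key fact to invoke is that in any tournament on $k$ vertices the out-degrees sum to $\binom{k}{2}$, since each of the $\binom{k}{2}$ arcs contributes exactly $1$ to exactly one out-degree; consequently the average out-degree is $(k-1)/2$ and in particular some vertex has out-degree at least $(k-1)/2$.

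First I would let $L \subseteq V(\dir T)$ be the set of vertices whose out-degree in $\dir T$ is strictly less than $\alpha m/2$. Proving the lemma is then equivalent to showing $|L| \le \alpha m$, because the remaining at least $(1-\alpha)m$ vertices would all have out-degree at least $\alpha m/2$. Here I would note the (only) point requiring care: since $\alpha m$ is even, $\alpha m/2$ is an integer, so ``out-degree $< \alpha m/2$'' is the same as ``out-degree $\le \alpha m/2 - 1$''; this is precisely where the parity hypothesis is used.

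Now suppose for contradiction that $|L| > \alpha m$. As $|L|$ and $\alpha m$ are both integers, this yields $|L| \ge \alpha m + 1$. Consider the induced sub-tournament $\dir T[L]$, which has $|L|$ vertices. By the averaging observation above, some vertex $v \in L$ has out-degree in $\dir T[L]$ at least $(|L|-1)/2$; since this value is an integer, it is in fact at least $\lceil (|L|-1)/2 \rceil \ge \alpha m/2$, using again that $\alpha m/2$ is an integer. Deleting vertices cannot increase an out-degree, so $v$ has out-degree at least $\alpha m/2$ in $\dir T$ as well, contradicting $v \in L$. Hence $|L| \le \alpha m$, which proves the lemma. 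There is no real obstacle here — the argument is a one-line degree count, and the parity condition in the statement is there exactly to make the integrality step go through cleanly.
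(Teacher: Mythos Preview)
Your proof is correct and follows essentially the same approach as the paper: both arguments isolate the set of vertices with out-degree below $\alpha m/2$ and then apply the averaging fact that a tournament on $r$ vertices has a vertex of out-degree at least $(r-1)/2$ to force $r \le \alpha m$. The paper dresses this up as an iterative removal process while you define the low-degree set $L$ directly and argue by contradiction, but the substance is identical.
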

\begin{proof}
	Let $S$ initially be the vertices of $\dir T$ and proceed according to the
	following process: find a vertex of out-degree at least $\alpha m/2$ 
	and remove it from $S$;
	and repeat until no such vertex exists in $S$.
	From here on we focus on set $S$ after the vertex removal process ended.
	
	Let $r = |S|$ be the size of the final set and consider the sub-tournament
	$\dir T[S]$. 
	We know that by averaging considerations, every tournament of size $r$ 
	has at least one vertex of out-degree at least $r/2 - 1/2$.
	We also know that $S$ does not contain any
	vertex of out-degree at least $\alpha m/2$. 
	Hence, we conclude that $r \le \alpha m$.
	
	Consequently, our process discovered $m - r \geq (1-\alpha) m $ vertices of
	out-degree at least $\alpha m/2$ in $\dir T$.
\end{proof}

\begin{lemma}\label{lemma:HighOutDegreeKK}
	Let $\dir G$ be an orientation of a complete bipartite graph $(V_0, V_1, E)$, where $|V_0| = |V_1| = m$, and 
	 $m$ is divisible by $4$ . 
	Then, there exists $i\in \{0,1\}$, such that $V_i$ has at least $m/2+1$ vertices $v$, where $d^+(v, V_{1-i}) \geq m/4$.
\end{lemma}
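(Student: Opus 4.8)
The plan is to argue by contradiction. Suppose that for \emph{both} $i \in \{0,1\}$ the part $V_i$ contains at most $m/2$ vertices $v$ with $d^+(v,V_{1-i}) \ge m/4$. Since $4 \mid m$, the value $m/4$ is an integer, so every vertex failing this bound satisfies $d^+(v,V_{1-i}) \le m/4-1$; hence each $V_i$ contains at least $m/2$ vertices whose out-degree into the opposite part is at most $m/4-1$. Fix, for $i=0,1$, a set $B_i \subseteq V_i$ consisting of exactly $m/2$ such ``low'' vertices.

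The core of the argument is a double count of the arcs running between $B_0$ and $B_1$. Since $B_0$ and $B_1$ span a complete bipartite graph, there are exactly $|B_0|\cdot|B_1| = m^2/4$ arcs between them. On the other hand, take any $v \in B_0$: from $d^+(v,V_1) \le m/4-1$ and $|V_1| = m$ we get that $v$ has at least $3m/4+1$ in-neighbours in $V_1$, and therefore at least $(3m/4+1) - |V_1 \setminus B_1| = m/4+1$ in-neighbours inside $B_1$. Summing over the $m/2$ vertices of $B_0$ shows that at least $(m/2)(m/4+1)$ arcs go from $B_1$ to $B_0$; the symmetric computation (swapping the roles of the two parts) shows that at least $(m/2)(m/4+1)$ arcs go from $B_0$ to $B_1$. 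Adding the two bounds, the number of arcs between $B_0$ and $B_1$ is at least $m(m/4+1) = m^2/4 + m$, contradicting the exact count $m^2/4$. Hence one part $V_i$ must contain more than $m/2$ --- and thus, by integrality of $m/2$, at least $m/2+1$ --- vertices $v$ with $d^+(v,V_{1-i}) \ge m/4$, which is exactly the claim. (Note that Lemma~\ref{lemma:HighOutDegreeK} is not needed for this.)

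The one place that requires genuine care is \emph{choosing the set over which to count}. The tempting move is a global double count: bound the total number of arcs from $V_0$ to $V_1$ (at least $m^2/2$ on the heavier side) in terms of the number of low-out-degree vertices on each side. Carrying this out only yields that one side has roughly $m/3$ high-out-degree vertices, not $m/2$, because it silently discards the realizability constraints of bipartite orientations --- for instance, a handful of out-degree-$m$ vertices in $V_0$ already caps every out-degree in $V_1$. Localizing the count to the two low-vertex sets $B_0,B_1$ and exploiting that they \emph{still} induce a complete bipartite graph is precisely what restores the sharp constant; the remaining estimates are one-line consequences of $4 \mid m$.
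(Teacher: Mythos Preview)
Your proof is correct and takes a genuinely different route from the paper's. The paper argues constructively: it greedily removes vertices of out-degree at least $m/4$ from $V_0\cup V_1$ until none remain, and then observes (via an averaging argument on the surviving bipartite graph) that one partite set must have shrunk below $m/2$, so at least $m/2+1$ high-out-degree vertices were removed from that side. Your argument is instead a static double count: you isolate $m/2$ ``low'' vertices $B_i$ in each part and show that the complete bipartite graph between $B_0$ and $B_1$ would have to carry at least $m^2/4 + m$ arcs, which is impossible. The paper's approach mirrors the proof style of the preceding Lemma~\ref{lemma:HighOutDegreeK} and is slightly more algorithmic in flavour; your approach is more self-contained (it needs neither an iterative process nor an auxiliary averaging fact about $K_{t,t}$), and your final paragraph correctly identifies the key point that restricting the count to $B_0\times B_1$ rather than $V_0\times V_1$ is what yields the sharp bound $m/2+1$.
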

\begin{proof}
	Let $S$ initially contain all the vertices in  $V_0 \cup V_1$ and proceed according to the
	following process: We find a vertex of out-degree at least $m/4$ and remove
	it from $S$. Repeat until no such vertex exists and we are left with 
	$S' \subseteq S$.
	
	Every orientation of the complete bipartite graph $K_{t,t}$ must contain, by
	a simple averaging argument, a vertex of out-degree at least $t/2$.
	Therefore the induced subgraph $\dir G[S']$ must have at least one partite
	set of size strictly less than $m/2$ or we could continue the process. 
	Consequently, our process discovered at least $m/2+1$ vertices of out-degree
	at least $m/4$ in that partite set.
\end{proof}

\begin{lemma}\label{lem:TournamentBipartition}
	Let $\dir T$ be a tournament on $2m$ vertices, where $m$ is divisible by
	$4$. Let further sets $S_0,S_1$ be a partition of the vertices of $\dir T$ into sets
	of equal size. Then there exists a vertex $v$ such that both $d^+(v, S_0)\geq m/4$ and $d^+(v, S_1) \geq m/4$.
\end{lemma}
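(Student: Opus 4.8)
The plan is to combine Lemma~\ref{lemma:HighOutDegreeKK}, applied to the bipartite arcs running between $S_0$ and $S_1$, with Lemma~\ref{lemma:HighOutDegreeK}, applied inside one of the two parts, and then to finish with a pigeonhole argument on $S_0$.

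First I would consider the orientation of the complete bipartite graph whose parts are $S_0$ and $S_1$ and whose arcs are exactly the arcs of $\dir T$ between $S_0$ and $S_1$. Since $|S_0| = |S_1| = m$ and $4 \mid m$, Lemma~\ref{lemma:HighOutDegreeKK} gives an index $i \in \{0,1\}$ such that at least $m/2 + 1$ vertices $v \in S_i$ satisfy $d^+(v, S_{1-i}) \geq m/4$. Relabelling if necessary, assume $i = 0$ and call this set $A \subseteq S_0$, so $|A| \geq m/2 + 1$.

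Next I would pass to the sub-tournament $\dir T[S_0]$, which has exactly $m$ vertices, and apply Lemma~\ref{lemma:HighOutDegreeK} with $\alpha = \frac{1}{2}$; here $\alpha m = m/2$ is even because $4 \mid m$, so the lemma applies and produces at least $(1-\frac{1}{2})m = m/2$ vertices whose out-degree within $\dir T[S_0]$ is at least $m/4$. In other words there is a set $B \subseteq S_0$ with $|B| \geq m/2$ and $d^+(v, S_0) \geq m/4$ for every $v \in B$.

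Finally, since $A, B \subseteq S_0$ and $|A| + |B| \geq (m/2 + 1) + m/2 = m + 1 > |S_0|$, the sets $A$ and $B$ must intersect; any vertex $v \in A \cap B$ then satisfies $d^+(v, S_1) \geq m/4$ by membership in $A$ and $d^+(v, S_0) \geq m/4$ by membership in $B$, as required. I do not expect a genuine obstacle in this argument: the only points that need care are checking the divisibility hypotheses so that both auxiliary lemmas can be invoked, and noticing that the ``$+1$'' slack in the conclusion of Lemma~\ref{lemma:HighOutDegreeKK} is exactly what makes the final pigeonhole step succeed.
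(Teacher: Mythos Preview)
Your argument is correct and follows essentially the same approach as the paper: apply Lemma~\ref{lemma:HighOutDegreeKK} to the bipartite arcs to get $m/2+1$ vertices on one side with large cross-degree, apply Lemma~\ref{lemma:HighOutDegreeK} with $\alpha=\tfrac12$ inside that side to get $m/2$ vertices with large internal out-degree, and intersect by pigeonhole. The only cosmetic difference is that the paper invokes Lemma~\ref{lemma:HighOutDegreeK} on both $S_0$ and $S_1$ before fixing~$i$, whereas you first fix~$i$ via Lemma~\ref{lemma:HighOutDegreeKK} and then apply Lemma~\ref{lemma:HighOutDegreeK} only on the relevant side.
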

\begin{proof}
	By Lemma~\ref{lemma:HighOutDegreeK},  for both $i\in \{0,1\}$,
	 there exist $m/2$ vertices $v$ in $S_i$ such
	that $|N^+(v)\cap S_i|\geq m/4$.  By
	Lemma~\ref{lemma:HighOutDegreeKK} for \emph{one} of $i \in \{0,1\}$, there
	exist  $m/2+1$ vertices $v$ such that  $|N^+(v)\cap S_{1-i}|\geq m/4$. Then
	by the pigeonhole principle, there exists $i\in \{0,1\}$ and a vertex  $v \in S_i$, such
	that $|N^+(v)\cap S_i|\geq m/4$, for every $i\in\{0,1\}$, as claimed. 
\end{proof}

\section{Constructing the template-graph}\label{sec:templategraph}

\begin{definition}[$\kappa$-template-graph]\label{def:TemplateGraph}
	Let $\kappa \in (0,1)$ and $G$ be an undirected graph over the vertex set $[n]$.
	The graph $G$ is a $\kappa$\emph{-template-graph}, if for every pair of disjoint sets $H_1, H_2 \subseteq [n]$ both of size at
	least $\kappa n^{2/3}$, there exists at least one edge between them, that is,
	$|E(H_1,H_2)| \geq 1$. 	
\end{definition}

\noindent
For the remainder of this section, we fix $\kappa\in (0,1)$ and set $p = \frac{2\log n + 2}{\kappa n^{2/3}}$.
We next show that with strictly positive probability the \Erdos--Renyi random graph $G(n,p)$ is a $\kappa$\emph{-template-graph}, 
with ${O}(n^{4/3}\log n)$ edges, where the $O$ notation hides a dependence on $\kappa$.
By the probabilistic method, this implies that there actually exists such a graph.

All probabilities in the following are with respect to the probability space of
this random graph. 

\begin{lemma}\label{lem:ThereISAnEdge}
	Let $\kappa \in (0,1)$.
	With probability at least $3/4$, the graph $G(n,p)$, where $p$ is defined as above,
	is a $\kappa$-template-graph.
\end{lemma}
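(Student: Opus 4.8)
The plan is a standard application of the probabilistic method: bound, via a union bound, the probability that $G(n,p)$ fails to be a $\kappa$-template-graph. First I would reduce to pairs of a single size. Put $s := \lceil \kappa n^{2/3} \rceil$. It suffices to show that with probability at least $3/4$ every pair of \emph{disjoint} sets $H_1, H_2 \subseteq [n]$ of size \emph{exactly} $s$ spans at least one edge of $G(n,p)$, since any two disjoint sets of size at least $\kappa n^{2/3}$ contain disjoint subsets of size $s$, and an edge between those subsets is in particular an edge between the larger sets. (If $2s > n$ there are no two disjoint $s$-sets at all, so $G(n,p)$ is trivially a $\kappa$-template-graph; likewise for small $n$ with $p \ge 1$ the graph is $K_n$, which is also trivially one. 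So we may assume $2s \le n$ and $p < 1$.)

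Next I would analyse a single bad event. Fix disjoint $s$-sets $H_1, H_2$. The $s^2$ potential edges between them are present independently, each with probability $p$, so
\[
  \Pr\bigl[ E(H_1, H_2) = \emptyset \bigr] \;=\; (1-p)^{s^2} \;\le\; e^{-p s^2}.
\]
Since $s \ge \kappa n^{2/3}$, we have $p s^2 = \frac{2\log n + 2}{\kappa n^{2/3}}\, s^2 \ge (2\log n + 2)\, s$, so this probability is at most $e^{-(2\log n + 2)s}$. The number of ordered pairs of disjoint $s$-sets is at most $\binom{n}{s}\binom{n-s}{s} \le \binom{n}{s}^2 \le n^{2s}$, so a union bound gives
\[
  \Pr\bigl[ G(n,p) \text{ is not a } \kappa\text{-template-graph} \bigr] \;\le\; n^{2s}\, e^{-(2\log n + 2)s} \;=\; e^{2 s \log n}\, e^{-(2\log n + 2)s} \;=\; e^{-2s} \;\le\; e^{-2} \;<\; \frac{1}{4},
\]
using $s \ge 1$ and reading $\log$ as the natural logarithm. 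Hence $G(n,p)$ is a $\kappa$-template-graph with probability greater than $3/4$, and in particular such a graph exists.

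I do not expect a genuine obstacle here; the argument is routine. The points that need care are (a) the reduction to sets of size exactly $s$, which keeps the number of bad events at the manageable $n^{2s}$, and (b) the bookkeeping of constants in the exponent: the definition $p = \frac{2\log n + 2}{\kappa n^{2/3}}$ is engineered precisely so that the factor $n^{2s}$ from the union bound is cancelled and an extra $e^{-2s}$ is left over, which pushes the failure probability below $1/4$. One should also dispatch the degenerate regimes ($2s > n$, or $p \ge 1$) separately, as noted above.
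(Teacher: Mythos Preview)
Your argument is correct and follows essentially the same route as the paper's own proof: reduce to pairs of sets of the minimal size, bound the probability that a fixed pair has no edge by $e^{-p s^2}$, and apply a union bound over at most $n^{2s}$ pairs so that the $n^{2s}$ factor is exactly cancelled and an $e^{-2s}$ term remains. Your handling of the ceiling and the degenerate regimes is in fact slightly more careful than the paper, which just writes ``up to rounding errors'' and implicitly assumes~$n$ large.
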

\begin{proof}
	Note that if we prove the statement of the lemma for sets of size \emph{exactly} (up
	to rounding errors) $\kappa n^{2/3}$, then the
	claim follows for all larger sets as well. 
	To prove this, we next show, with the help of the union bound, 
	that the probability that $G(n,p)$ has two disjoint subsets of vertices, each of size $\kappa n^{2/3}$, 
	with no edge between them is strictly less than $1/4$. 
	
	Let $H_1$ and $H_2$ be any pair of disjoint subsets of $[n]$ of size $\kappa n^{2/3}$, 
	then the total number
	of vertex pairs between them is
	$(\kappa n^{2/3})^2$. The probability that none of these pairs is an edge 
	in the template-graph $G$ is accordingly $(1-p)^{(\kappa n^{2/3})^2}$. 
	
	We apply the exponential bound
	$(1-p)^k \leq e^{-pk}$ for a $k$-round Bernoulli trial and obtain
	\begin{align*}
		(1-p)^{(\kappa n^{\frac{2}{3}})^2} 
		& \leq e^{-p (\kappa n^{2/3})^2}
		= e^{-(2\log n+2)\kappa n^{2/3}} \\
		&= n^{-2\kappa n^{2/3}} e^{- 2\kappa n^{2/3}}
		\leq \frac{1}{4} n^{-2\kappa n^{2/3}},
	\end{align*}
	where the last inequality holds when~$n$ is large enough so that
	$\kappa n^{\frac{2}{3}} \geq 1$ and hence~$e^{-2\kappa n^{2/3}} < \frac{1}{4}$.
	Since the total number of pairs of sets $H_1, H_2$ of size $\kappa n^{2/3}$ is bounded above by $n^{2\kappa n^{2/3}}$, the claim now follows from the union bound. 
\end{proof}

\begin{theorem}\label{thm:KappaTemplateExists}
	Let $\kappa \in (0,1)$, there exists a $\kappa$-template-graph $G$ with at most $O(n^{4/3}\log{n}/\kappa)$ edges.
\end{theorem}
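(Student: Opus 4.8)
The plan is to apply the probabilistic method once more, this time controlling \emph{two} quantities of the \Erdos--Renyi random graph $G(n,p)$ simultaneously: the probability that it fails to be a $\kappa$-template-graph, and the probability that it has too many edges. Showing that these two bad events together do not exhaust the probability space yields the existence of a graph with both desired properties.

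First I would invoke Lemma~\ref{lem:ThereISAnEdge}: for $n$ large enough (so that the threshold implicit in that lemma is met), we have $\Pr[\,G(n,p) \text{ is not a } \kappa\text{-template-graph}\,] < 1/4$, where $p = \frac{2\log n + 2}{\kappa n^{2/3}}$. Next I would estimate the expected number of edges. Since each of the $\binom{n}{2}$ potential edges is present independently with probability $p$, the expected number of edges is $\binom{n}{2} p \leq \frac{n^2}{2}\cdot \frac{2\log n + 2}{\kappa n^{2/3}} = O\!\left(n^{4/3}\log n / \kappa\right)$. By Markov's inequality, the probability that $G(n,p)$ has more than $4$ times this expectation many edges is at most $1/4$.

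Then, by the union bound, with probability at least $1 - 1/4 - 1/4 = 1/2 > 0$ the random graph $G(n,p)$ is \emph{both} a $\kappa$-template-graph \emph{and} has at most $4\binom{n}{2}p = O(n^{4/3}\log n/\kappa)$ edges. As this probability is strictly positive, such a graph exists, proving the theorem for all sufficiently large $n$. For the finitely many remaining small values of $n$, the complete graph on $[n]$ is trivially a $\kappa$-template-graph and has $\binom{n}{2} = O(1)$ edges, which is absorbed into the asymptotic bound (the constant depending on $\kappa$ only through the large-$n$ regime).

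I do not expect a genuine obstacle here. The only points that require minor care are (a) making the ``$n$ large enough'' threshold inherited from Lemma~\ref{lem:ThereISAnEdge} explicit and disposing of small $n$ via the complete graph, and (b) being consistent about rounding $\kappa n^{2/3}$ to an integer, exactly as in the proof of Lemma~\ref{lem:ThereISAnEdge}. One could replace Markov's inequality by a Chernoff bound to obtain concentration of the edge count, but since we only need existence, Markov suffices.
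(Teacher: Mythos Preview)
Your proposal is correct and follows essentially the same approach as the paper: invoke Lemma~\ref{lem:ThereISAnEdge} for the template-graph property, bound the edge count via concentration around its expectation, and combine with a union bound. The only difference is that the paper uses a Chernoff bound (obtaining edge count at most $2m$ with failure probability $\leq 1/4$) where you use Markov's inequality (edge count at most $4\binom{n}{2}p$ with failure probability $\leq 1/4$); as you yourself note, Markov already suffices for existence and is arguably the cleaner choice here.
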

\begin{proof}
	The expected number of edges of~$G(n,p)$ for our choice of~$p = (2\log n + 2)/(\kappa n^{2/3})$ is less than~$m := (2\log n + 2)n^{4/3} / \kappa$. Since every edge of the graph is selected independently, by the Chernoff bound the probability that the number of edges in $G(n,p)$ exceeds~$2m$ 
	is at most 
	\begin{align*}
		e^{-p{n \choose 2}/3} 
		&\leq e^{-\frac{(2\log n + 2)}{\kappa n^{2/3}} \frac{n(n-1)}{6} }\\
		&\leq e^{-(2\log n+2)n/6} \leq \frac{1}{4}
	\end{align*}
	where the last inequality holds for~$n \geq 4$.
	
	Together with Lemma~\ref{lem:ThereISAnEdge} this implies that with
	probability at least $1/2$, $G(n,p)$ is a $\kappa$-template-graph $G$
	with at most $O(n^{4/3}\log{n}/\kappa)$ edges. The claim follows by
	the probabilistic method.
\end{proof}

\section{The seeker strategy}\label{sec:query}

\noindent
Having proved the existence of a $\kappa$-template-graph, 
we next examine the properties of an arbitrary orientation of such a graph.
Given a $\kappa$-template-graph $G_{\kappa}$ on a vertex set $[n]$, 
we use $\dir G_\kappa$ (\emph{henceforth}) to 
refer to a directed graph obtained by replacing every edge 
of $G_\kappa$ by a directed arc. Note that we assume nothing about $\dir G_\kappa$ and
analyze as if its arcs were arbitrarily oriented by an adversary.

\begin{definition}[$\eta$-weak, $\eta$-strong, $\eta$-ultra]\label{def:Sets}
	\marginnote{$\eta$-weak, $\eta$-strong, $\eta$-ultra}
	For an oriented template-graph $\dir G_\kappa$ and any $\eta > 0$, a set $H\subseteq [n]$ 
	is \emph{$\eta$-weak} if $d^+(H) <
	(1/2 +\eta)n$, is \emph{$\eta$-strong} if $d^+(H) \geq (1/2 +\eta)n$. We
	call a set \emph{$\eta$-ultra} if every subset $H' \subset H$, of size at
	least $|H|/2$ is $\eta$-strong.
\end{definition}

\noindent
To understand why $\eta$-ultra sets are important, it is useful to think of the seeker as trying to force the obscurer to reveal enough information (in the form of query answers) so that the seeker can achieve their goal.
This is done by first querying the orientation of all the edges of a template graph and nothing else. 
The observation below implies that if the orientation of the edges of the template graphs reveals an $eta$-ultra set, of size $\tilde{O}(n^{2/3})$, then the seeker can achieve its goal with an additional $\tilde{O}(n^{4/3})$ queries.
Thus, the obscurer cannot reveal such an ultra-set. 
However, as we show further on, by doing this the obscurer reveals enough information for the seeker to achieve their goal.

\begin{observation}\label{obs:DeltaUltra}
	Let $H \subseteq V( \dir G_\kappa)$ be an $\eta$-ultra set. Then we can find a
	$(1/2+\eta)$-king using $\leq |H|^2$ additional queries.
\end{observation}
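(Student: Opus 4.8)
The goal is to locate, using at most $|H|^2$ queries, a vertex that controls at least $(1/2+\eta)n$ vertices of the whole tournament. The only queries we are allowed are the $\binom{|H|}{2}$ arcs inside $H$, so the strategy must be: query all of $\dir T[H]$, then use the definition of $\eta$-ultra together with Lemma~\ref{lemma:HighOutDegreeK} to extract a single vertex whose out-neighbourhood inside $H$ is large enough that its second-out-neighbourhood in $\dir T$ swallows everything an $\eta$-strong set reaches.

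**Key steps.** First I would query every arc of $\dir T[H]$; this costs $\binom{|H|}{2}\le |H|^2$ and reveals the full sub-tournament on $H$. Next, apply Lemma~\ref{lemma:HighOutDegreeK} to $\dir T[H]$ with $\alpha = 1/2$ (adjusting $|H|$ by at most one to make $\alpha|H|$ even — I would either assume $|H|$ is chosen appropriately or absorb the rounding): this yields at least $|H|/2$ vertices of $\dir T[H]$ whose out-degree \emph{within $H$} is at least $|H|/4$, hence at least $|H|/2$ vertices $v$ with $|N^+(v)\cap H|\ge |H|/2 - O(1)$ — more carefully, I want a vertex $v\in H$ with $|N^+_H(v)|+1 \ge |H|/2$, i.e. $v$ directly controls at least half of $H$. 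Pick any such $v$ and set $H' := (N^+(v)\cap H)\cup\{v\}$, so $|H'|\ge |H|/2$ and $v$ directly controls all of $H'$.

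**The crux.** Because $H$ is $\eta$-ultra and $|H'|\ge |H|/2$, the set $H'$ is $\eta$-strong: $d^+(H') \ge (1/2+\eta)n$. Now I claim $v$ controls every vertex counted by $d^+(H')$: if $w\notin H'$ receives an arc from some $u\in H'$, then since $v$ directly controls $u$ (either $u=v$ or $vu\in\dir T$), the path $v\to u\to w$ has length at most two, so $w\in N^{++}[v]$. Together with $H'$ itself (which $v$ directly controls, hence controls), we get that $v$ controls at least $|H'| + d^+(H')$ vertices — but I actually only need $v$ to control the $\ge (1/2+\eta)n$ vertices outside $H'$ that $H'$ points to, which already gives $|N^{++}[v]| \ge d^+(H') \ge (1/2+\eta)n$. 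Hence $v$ is a $(1/2+\eta)$-king, and it was identified using only the arcs inside $H$.

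**Anticipated obstacle.** The main thing to get right is the bookkeeping on sizes and parity: Lemma~\ref{lemma:HighOutDegreeK} requires $\alpha m$ even and delivers out-degree $\ge \alpha m/2$, so I must check that "out-degree $\ge |H|/4$ in a tournament on $\ge |H|/2$ vertices" actually forces "directly controls $\ge |H|/2$ of $H$" — it does not directly, so instead I should invoke the averaging fact used in that lemma's proof, namely that $\dir T[H]$ has a vertex $v$ with $\deg^+_H(v)\ge |H|/2 - 1/2$, giving $|N^+(v)\cap H| \cup \{v\}| \ge \lceil |H|/2\rceil \ge |H|/2$ directly. This single-vertex averaging bound, rather than the many-vertices version, is the clean route, and the only real care needed is ensuring $\lceil |H|/2 \rceil \ge |H|/2$ so that $H'$ has size at least $|H|/2$ and the $\eta$-ultra hypothesis applies verbatim. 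Everything else is immediate from the definitions of \emph{controls} and $\eta$-strong.
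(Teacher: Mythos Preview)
Your proposal is correct and follows essentially the same approach as the paper: query all arcs inside $H$, take a vertex $v$ with $\deg^+(v,H)\ge |H|/2$, and use that $H':=N^+(v)\cap H$ (the paper does not bother adjoining $v$) is $\eta$-strong to conclude $|N^{++}[v]|\ge (1/2+\eta)n$. Your detour through Lemma~\ref{lemma:HighOutDegreeK} is unnecessary and, as you yourself note, does not quite deliver the needed bound; the paper simply invokes the averaging fact directly, and your eventual fix (max out-degree $\ge (|H|-1)/2$, then add $v$ to $H'$ for parity) is exactly the right clean route.
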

\begin{proof}
	Query all $\leq |H|^2$ edges inside $H$. Let $v \in H$
	be a vertex such that $\deg^+(v,H) \geq |H|/2$. Since $H$ is $\eta$-ultra,
	the set $H' := N^+(v) \cap H$ is $\eta$-strong, meaning $\deg^+(H') \geq (1/2+\eta)n$.
	Therefore $|N^{++}[v]| \geq (1/2+\eta)n$ and $v$ is a 
	$(1/2+\eta)$-king.
\end{proof}

\begin{definition}[Free set]
	\marginnote{Free set, $F(W)$}
	Let $W \subseteq V(\dir G_\kappa)$ be an $\eta$-weak set. Then the \emph{free set} of $W$
	is the vertex set $F(W) := V(\dir G_\kappa)\setminus (N^+(W) \cup W)$, that is,
	all vertices that lie neither in $W$ nor in $N^+(W)$.
\end{definition}

\begin{observation}\label{obs:WeakSetFree}
	Let $W$ be an $\eta$-weak set. Then $|F(W)| > (\frac{1}{2} - \eta)n - |W|.$
\end{observation}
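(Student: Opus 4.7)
The plan is to unfold the definitions and observe that each vertex counted in $N^+(W)\setminus W$ is witnessed by at least one arc counted in $d^+(W)$, so one bounds $|N^+(W)\setminus W|$ by $d^+(W)$ and then uses $\eta$-weakness.

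First I would rewrite $|F(W)|$ via the set decomposition. Since $F(W) = V(\dir G_\kappa) \setminus (N^+(W) \cup W)$ and the template graph has $n$ vertices, I have
\[
|F(W)| \;=\; n - |N^+(W) \cup W| \;=\; n - |W| - |N^+(W) \setminus W|,
\]
where the last equality uses that $N^+(W)\setminus W$ and $W$ are disjoint.

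Next I would bound $|N^+(W)\setminus W|$. By the definition of $N^+(W)$, every vertex $u \in N^+(W) \setminus W$ has at least one in-arc from some $v \in W$ in $\dir G_\kappa$; each such arc is an arc from $W$ to $V(\dir G_\kappa)\setminus W$, hence contributes to $d^+(W)$. Distinct vertices $u$ yield distinct arcs, so
\[
|N^+(W) \setminus W| \;\leq\; d^+(W).
\]
Combined with the assumption that $W$ is $\eta$-weak, namely $d^+(W) < (\tfrac{1}{2}+\eta)n$, I obtain
\[
|F(W)| \;>\; n - |W| - \bigl(\tfrac{1}{2}+\eta\bigr)n \;=\; \bigl(\tfrac{1}{2}-\eta\bigr)n - |W|,
\]
which is exactly the claimed inequality.

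There is no real obstacle here: the argument is essentially a single inclusion-exclusion together with the trivial observation that the number of vertices reached by out-arcs from $W$ is at most the number of out-arcs from $W$. The only thing to be careful about is distinguishing $N^+(W)$ (which may overlap $W$) from $N^+(W) \setminus W$ when accounting for sizes, and noting that $d^+(W)$ only counts arcs leaving $W$, which is precisely what is needed to bound the latter.
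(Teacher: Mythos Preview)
Your argument is correct. The paper does not supply a proof for this observation---it is stated as an immediate consequence of the definitions of $\eta$-weak set and free set---and your unfolding of those definitions, together with the bound $|N^+(W)\setminus W|\le d^+(W)$, is exactly the intended one-line justification.
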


\noindent
By the properties of template-graphs, namely that each pair of large enough sets must have an edge between them, and by the definition of free sets it follows that all the arcs of $\vec{G}_\kappa$ between a sufficiently large set~$W$ and its free set~$F(W)$ must point towards~$W$. Let us formalize this intuition:

\begin{definition}[$\alpha$-covers]
	For $\alpha \in [0,1]$ we say that a set $S$ \emph{$\alpha$-covers} a 
	set $W$ if $|N^+(S) \cap W| \geq \alpha |W|$.
\end{definition}

\begin{lemma}\label{lemma:TileControl}
	In the template graph, for every set $W \subset [n]$ of size $n^{2/3}$ and
	every subset $S \subseteq F(W)$ of size at least $\kappa n^{2/3}$,
	it holds that $S$ $(1-\kappa)$-covers $W$.
\end{lemma}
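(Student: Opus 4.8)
The plan is to show that every vertex $w \in W$ that is *not* covered by $S$ — i.e.\ every $w$ with no in-arc from $S$ in $\dir G_\kappa$ — must in fact have *no neighbour at all in $S$* within the underlying template-graph $G_\kappa$, and then to invoke the template-graph property to bound the number of such $w$. The key observation is the remark made just before the lemma: since $W$ is $\eta$-weak and $S \subseteq F(W) = V(\dir G_\kappa) \setminus (N^+(W) \cup W)$, none of the arcs of $\dir G_\kappa$ between $W$ and $F(W)$ can point from $W$ into $F(W)$; by definition of $F(W)$ as the complement of $N^+(W)\cup W$, every vertex of $F(W)$ is a non-out-neighbour of every vertex of $W$, so every template-edge between $W$ and $S$ is oriented from $S$ towards $W$. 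Hence a vertex $w \in W$ has an in-arc from $S$ in $\dir G_\kappa$ \emph{if and only if} $w$ has a neighbour in $S$ in the undirected graph $G_\kappa$.

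First I would let $B := \{\, w \in W : N(w) \cap S = \emptyset \,\}$ be the set of vertices of $W$ with no template-neighbour in $S$; by the equivalence above, $B$ is exactly the set of vertices of $W$ not covered by an in-arc from $S$, so $|N^+(S) \cap W| = |W| - |B|$ and it suffices to prove $|B| < \kappa |W| = \kappa n^{2/3}$. Suppose for contradiction that $|B| \geq \kappa n^{2/3}$. Then $B$ and $S$ are two disjoint vertex sets (disjoint since $B \subseteq W$ and $S \subseteq F(W)$, which is disjoint from $W$), both of size at least $\kappa n^{2/3}$. By Definition~\ref{def:TemplateGraph}, the template-graph $G_\kappa$ must contain at least one edge between $B$ and $S$ — contradicting the fact that, by construction, no vertex of $B$ has a neighbour in $S$. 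Therefore $|B| < \kappa n^{2/3}$, and so $|N^+(S) \cap W| = |W| - |B| > (1-\kappa)n^{2/3} = (1-\kappa)|W|$, which gives $S$ $(1-\kappa)$-covers $W$ (with a strict inequality, hence certainly the claimed non-strict one).

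I expect the only genuinely delicate point to be the direction-of-arcs step: one must carefully unwind the definitions of $\eta$-weak set, $N^+(W)$, and $F(W)$ to be sure that \emph{every} template-arc between $W$ and $S$ points into $W$, and in particular that "$w$ is not an out-neighbour of any $s \in S$" is the same as "$s$ is an out-neighbour of $w$, i.e.\ $s \in N^+(w)$" — which fails in general digraphs but holds here because $G_\kappa$ is a simple undirected graph whose every edge receives exactly one orientation in $\dir G_\kappa$. Everything else is a one-line application of the template-graph's connectivity property combined with Observation~\ref{obs:WeakSetFree} to confirm the sets involved are large enough; the arithmetic $|W| = n^{2/3}$ and $|B| < \kappa n^{2/3}$ is immediate.
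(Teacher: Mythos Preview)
Your proof is correct and is essentially the same argument as the paper's, just packaged contrapositively: the paper iteratively peels off vertices of $W$ that have a template-neighbour in $S$ until fewer than $\kappa n^{2/3}$ remain, whereas you directly define the uncovered set $B$ and apply Definition~\ref{def:TemplateGraph} once to get $|B| < \kappa n^{2/3}$. The only minor remark is that neither the $\eta$-weakness of $W$ nor Observation~\ref{obs:WeakSetFree} is actually needed here---the hypothesis $S \subseteq F(W)$ already forces all template-edges between $S$ and $W$ to point into $W$, and $|S| \geq \kappa n^{2/3}$ is given---so you can drop those references.
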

\begin{proof}
	Consider a set $S \subseteq F(W)$ of size $\kappa n^{2/3}$. Then, by Lemma~\ref{lem:ThereISAnEdge},  there is at least  one edge $s_1w_1$ between some $s_1 \in S$ and $w_1 \in W$. Remove $w_1$ from $W$ and apply the argument to the remainder. In this way, we construct a
	sequence $w_1,\ldots,w_t$ such that each $w_i$ has at least one neighbour in $S$. 
	
	The application of Lemma~\ref{lem:ThereISAnEdge} is possible until the remainder of $W$ has size less than $\kappa n^{2/3}$, hence the process works for at least $t = n^{2/3} - \kappa n^{2/3} = (1-\kappa) n^{2/3}$ steps.
	Now simply note that each edge $sw_i$ for $s \in S$ must be oriented from $s$ to $w_i$ since $S$ is a subset of $F(W)$. It follows that $|N^+(S) \cap W| \geq (1-\kappa)|W|$, as claimed.
\end{proof}

\noindent 
In the previous lemma lies the inherent usefulness of free sets. If, for some set~$W$
of size~$n^{2/3}$, we find a vertex~$v$ that has at least~$\kappa n^{2/3}$ out-neighbours
in the free set~$F(W)$, then~$v$ controls almost all of~$W$.
As observed above, $\eta$-weak sets have necessarily large
free sets which makes them `easy targets' for our strategy.

 We now show that in case  no
$\eta$-ultra set exists (in which case we already win as per Observation~\ref{obs:DeltaUltra}), we can instead partition most of the vertices of $V(\dir G_\kappa)$ into weak sets.

\begin{definition}
	An \emph{$\eta$-weak tiling} of $\dir G_\kappa$ is a vertex partition $W_1,\ldots,W_m,R$
	where $|W_i| = n^{2/3}$, $|R| < 2n^{2/3}$ and every set $W_i$ is $\eta$-weak.
	We call the sets $W_i$ the \emph{tiles} and $R$ the \emph{remainder}.
\end{definition}

\noindent
By definition, the number of tiles $m$ in an $\eta$-weak tiling is at least $n^{1/3} - 2$. 

\begin{lemma}\label{lem:UltraSetOrTiling}
	Fix $\eta > 0$. For large enough $n$, $\dir G_\kappa$ either contains an $\eta$-ultra
	set of size $2n^{2/3}$ or an $\eta$-weak tiling.
\end{lemma}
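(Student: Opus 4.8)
The plan is to attempt to build an $\eta$-weak tiling greedily, and show that the only obstruction to the greedy procedure is the existence of an $\eta$-ultra set of size $2n^{2/3}$. Concretely, I would process vertices in rounds: maintain a set $U$ of not-yet-assigned vertices, initially $U = [n]$, and as long as $|U| \geq 2n^{2/3}$ try to carve off a tile of size $n^{2/3}$ from $U$ that is $\eta$-weak. If this always succeeds, after at most $n^{1/3}$ rounds we are left with a remainder $R = U$ of size $|R| < 2n^{2/3}$, giving an $\eta$-weak tiling. So the whole lemma reduces to the following local claim: whenever $|U| \geq 2n^{2/3}$, either $U$ contains an $\eta$-weak subset of size exactly $n^{2/3}$, or $\dir G_\kappa$ contains an $\eta$-ultra set of size $2n^{2/3}$.

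To prove the local claim, fix $U$ with $|U| \geq 2n^{2/3}$ and suppose $U$ contains \emph{no} $\eta$-weak subset of size $n^{2/3}$; I must produce an $\eta$-ultra set of size $2n^{2/3}$. The natural candidate is some subset $A \subseteq U$ of size $2n^{2/3}$: recall $A$ is $\eta$-ultra iff every $A' \subset A$ with $|A'| \geq |A|/2 = n^{2/3}$ is $\eta$-strong, i.e.\ $d^+(A') \geq (1/2 + \eta)n$. Since every subset of $U$ of size exactly $n^{2/3}$ is $\eta$-strong by assumption, and since $d^+$ is "monotone-ish" — here I would use Lemma~\ref{obs:WeakSetFree} / the definition of $d^+$ to argue that adding vertices to a set can only drop $d^+$ in a controlled way, or more cleanly: if $A' \supseteq B$ with $|B| = n^{2/3}$, then the arcs counted by $d^+(B)$ leaving $B$ to $[n]\setminus B$ that go to $[n]\setminus A'$ are still counted in $d^+(A')$, so $d^+(A') \geq d^+(B) - |A' \setminus B| \cdot |A'|$ or a similar bound — I get that any $A' \subseteq U$ with $n^{2/3} \leq |A'| \leq 2n^{2/3}$ containing some size-$n^{2/3}$ subset is "almost $\eta$-strong". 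The cleanest route is probably: take $A'$ of size between $n^{2/3}$ and $2n^{2/3}$; it contains a size-$n^{2/3}$ subset $B$ which is $\eta$-strong, and $d^+(A') \geq d^+(B) - |B|\cdot|A'\setminus B| \geq (1/2+\eta)n - O(n^{4/3})$, which is \emph{not} quite $\geq (1/2+\eta)n$. So a crude monotonicity argument loses a lower-order term that is in fact super-constant here, and this is exactly where care is needed.

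I expect the main obstacle to be precisely this gap: "$\eta$-weak" is defined with a threshold of $(1/2+\eta)n$, and $\eta$-ultra demands \emph{all} large subsets clear the \emph{same} threshold, so a naive monotonicity argument does not close. The fix I would pursue is to not ask for $A$ itself but to choose $A$ carefully — e.g.\ pick $A \subseteq U$ of size $2n^{2/3}$ so that $A$ is, among all such subsets, one \emph{minimizing} $\max_{A' \subseteq A, |A'| = n^{2/3}} (\text{something})$, or alternatively to iterate the weak-set extraction: if some size-$n^{2/3}$ subset $A' \subset A$ were $\eta$-weak, then $A'$ itself is a weak subset of $U$, contradicting our standing assumption — \emph{provided} every size-$n^{2/3}$ subset genuinely has the option of being tested, which it does. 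The real subtlety is subsets $A'$ of size strictly between $n^{2/3}$ and $2n^{2/3}$; for those I would split $A'$ into a size-$n^{2/3}$ piece $B$ (which is $\eta$-strong, so $d^+(B)$ is large) and observe that moving from $B$ to $A' \supseteq B$ relocates at most $|A' \setminus B| \le n^{2/3}$ vertices, and each such vertex was an out-neighbour counted in $d^+(B)$ for at most $|B|$ arcs from $B$ — but crucially the arcs \emph{into} $A'\setminus B$ \emph{from outside} now also count toward $d^+(A')$ if... no. The honest assessment: this is the delicate inequality the authors must be threading, likely by choosing constants so that the lower-order slack is absorbed, or by strengthening "$\eta$-weak subset of size $n^{2/3}$" in the greedy step to "$\eta'$-weak" for a slightly larger $\eta' > \eta$, buying the room needed so that all intermediate-size subsets of the leftover $2n^{2/3}$-block are $\eta$-strong. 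I would set up the greedy extraction with this stronger threshold, verify the tiling still has the right parameters, and then the $\eta$-ultra conclusion follows because the slack $\eta' - \eta$ dominates the $O(n^{4/3})/n = O(n^{1/3})$... which it does not, so in fact the resolution must be that $|A'\setminus B| \le n^{2/3}$ vertices leaving $B$ cost at most $n^{2/3}$ \emph{arcs} total in the worst case when counting $d^+$ as number of \emph{vertices} reached — and re-reading, $d^+(X)$ counts \emph{arcs} from $X$ to $\bar X$, so the loss is at most $|A' \setminus B| \cdot n = O(n^{5/3})$, genuinely too big. Therefore the correct plan is almost certainly \textbf{not} crude monotonicity at all but rather: assume no weak size-$n^{2/3}$ subset exists \emph{anywhere} in $U$, pick \emph{any} $A$ of size $2n^{2/3}$, and prove every $A' \subseteq A$ with $|A'| \ge n^{2/3}$ is $\eta$-strong by a direct counting argument using the template-graph edge property (each size-$\ge\kappa n^{2/3}$ set sees almost everything) together with the fact that $A'$ contains a weak-free... — the hard part is identifying this direct argument, and I would spend my effort there rather than on monotonicity.
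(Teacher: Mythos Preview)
Your greedy framework is exactly what the paper does, but you have made the local step much harder than it needs to be. The paper's argument at each round is a one-line tautology from the definition of $\eta$-ultra; no monotonicity is involved.

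Specifically: with $|R| \geq 2n^{2/3}$, pick an \emph{arbitrary} $H \subseteq R$ of size exactly $2n^{2/3}$ and ask whether $H$ is $\eta$-ultra. If yes, done. If no, then \emph{by definition} some $H' \subset H$ with $|H'| \geq |H|/2 = n^{2/3}$ is not $\eta$-strong, i.e.\ is $\eta$-weak; set $W_{j+1} := H'$ and continue. That is the entire local step.

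Your difficulty came from phrasing the greedy search as ``find a weak subset of size \emph{exactly} $n^{2/3}$'', then taking as your standing hypothesis ``no weak subset of size exactly $n^{2/3}$ exists in $U$'', and trying to deduce from that hypothesis that every subset of $A$ of size \emph{at least} $n^{2/3}$ is strong. That implication genuinely does not follow by the kind of monotonicity you attempted (and you correctly diagnosed each attempt as failing). But you never needed it: the definition of $\eta$-ultra already quantifies over all subsets of size $\geq |H|/2$, so its negation directly hands you a weak subset of size $\geq n^{2/3}$. The right dichotomy at each step is ``$H$ is ultra'' versus ``$H$ is not ultra'', not ``some size-$n^{2/3}$ subset of $U$ is weak'' versus ``none is''.

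(A small wrinkle you may notice: the weak $H'$ obtained this way has size in $[n^{2/3}, 2n^{2/3})$, not necessarily exactly $n^{2/3}$. The paper writes $|W_{j+1}| = |H|/2$ and glosses over this. It is harmless downstream, and alternatively one can replace ``at least $|H|/2$'' by ``exactly $|H|/2$'' in Definition~\ref{def:Sets} without affecting Observation~\ref{obs:DeltaUltra}, which restores exact-size tiles.)
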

\begin{proof}
	We construct the tiling iteratively. Assume we have constructed $W_1,\ldots,W_j$
	so far. Let $R := V(\dir G_\kappa) \setminus \bigcup_{i \le j} W_i$ be all the vertices of $\dir G_\kappa$ which are not yet part of the tiling. If $|R| < 2n^{2/3}$ we are done, so assume otherwise. Let $H \subseteq R$ be an arbitrary vertex set of size $2n^{2/3}$. If $H$ is $\eta$-ultra, then by Observation~\ref{obs:DeltaUltra}, we are done. Otherwise there exists an $\eta$-weak set $W_{j+1} \subseteq H$, $|W_{j+1}| = |H|/2$. Add this set to the tiling and repeat the construction. At the end of this procedure, we will either find an $\eta$-ultra set or an $\eta$-weak tiling.
\end{proof}

\noindent
Our goal is now to find a vertex whose out-neighbourhood has large intersections
with many free sets. To organise this search, we define the following auxiliary structure:

\begin{definition}[Free matrix]\label{def:FreeMatrix}
	Let $W_1,\ldots, W_m, R$ be an $\eta$-weak tiling of $\dir G_\kappa$ and let $\mathcal W = \{ W_1, \ldots, W_m \}$. The \emph{free matrix} $M$ of the tiling $\mathcal W, R$ is a binary matrix with $m$ rows indexed by $\mathcal W$ and $n$ columns indexed by $[n]$. The entry at position $(W_i,v) \in \mathcal W \times V$ is $1$ if 
	$v \in F(W_i)$ and $0$ otherwise.
\end{definition}

\marginnote{weight, $\sum M$}
\noindent
We will use the following notation in the rest of this section. Given a free
matrix~$M$ of an $\eta$-weak tiling $\mathcal W, R$ let $M[\mathcal W', U]$
denote a sub-matrix of $M$ induced by a subset $\mathcal W' \subseteq
\mathcal W$ of the tile set and a subset $U \subseteq [n]$ of the vertex set.
For example, a column  of $M$ corresponding to a vertex $v \in [n]$ can be
written as $M[\mathcal W, \{v\}]$ in this notation. Analogously a row of $M$
corresponding to a tile $W_i \in \mathcal W$ can be written as $M[\{W_i\},
[n]]$. Given a sub-matrix $M'$ of the free matrix $M$, we call the number of
$1$'s in $M'$ the \emph{weight} of $M'$ and denote it by $\sum M'$.

%\noindent
The following is a direct consequence of the construction of the free matrix and Observation~\ref{obs:WeakSetFree}.

\begin{observation}\label{obs:RowSum}
	Every row of the free matrix~$M$ has a weight of at least $(\frac{1}{2} - \eta - n^{-1/3})n$.
\end{observation}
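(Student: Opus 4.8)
\textbf{Proof plan for Observation~\ref{obs:RowSum}.}

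The plan is to unpack the definitions and apply the one bound we already have, namely Observation~\ref{obs:WeakSetFree}. Fix a tile $W_i$ of the $\eta$-weak tiling. By Definition~\ref{def:FreeMatrix}, the weight of the row $M[\{W_i\},[n]]$ is exactly the number of vertices $v \in [n]$ with $v \in F(W_i)$, that is, $\sum M[\{W_i\},[n]] = |F(W_i)|$. So the statement reduces to a lower bound on $|F(W_i)|$.

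Now I would invoke the fact that each $W_i$ is $\eta$-weak (part of the definition of an $\eta$-weak tiling) together with $|W_i| = n^{2/3}$. Observation~\ref{obs:WeakSetFree} gives $|F(W_i)| > (\tfrac12 - \eta)n - |W_i| = (\tfrac12 - \eta)n - n^{2/3}$. Writing $n^{2/3} = n^{-1/3}\cdot n$, this is exactly $(\tfrac12 - \eta - n^{-1/3})n$, which is the claimed bound. Since $W_i$ was an arbitrary tile, every row of $M$ has weight at least $(\tfrac12 - \eta - n^{-1/3})n$.

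There is essentially no obstacle here: the only genuine content is Observation~\ref{obs:WeakSetFree}, and the rest is reading off the definition of the free matrix and substituting $|W_i| = n^{2/3}$. If one wanted to be careful, the only thing to double-check is that the inequality in Observation~\ref{obs:WeakSetFree} is stated for sets of size $n^{2/3}$ (which it effectively is, since its proof just uses $|F(W)| \ge n - |W| - |N^+(W)|$ and $d^+(W) < (\tfrac12+\eta)n$ bounds $|N^+(W)|$); here the tiles have exactly that size, so the substitution is immediate and the "direct consequence" phrasing in the paper is justified.
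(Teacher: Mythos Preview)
Your proposal is correct and matches the paper's approach exactly: the paper does not even spell out a proof, merely stating that the observation is ``a direct consequence of the construction of the free matrix and Observation~\ref{obs:WeakSetFree},'' which is precisely the chain of reasoning you wrote out.
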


% Similarly the number of $1$'s in a column $c$ (or row $r$) of $M$ is called the \emph{column sum} of $c$ (\emph{row} sum of $r$).

%\begin{definition}\label{def:HeavyColumns}
%Let $M$ be the free matrix of an $\eta$-weak tiling $\mathcal W, R$ of $\dir G$.
%The \emph{heavy columns} of $M$ is a set $H \subset [n]$ of the $2n^{2/3}$ 
% columns with the highest column sum in $M$.
%\end{definition}

\begin{definition}[Good Sub-Matrix]\label{def:GoodSubmatrix}
	A sub-matrix $M[\mathcal W, U]$, for some $U \subset [n]$, is $\eta$-\emph{good} if, 
	each one of its rows has weight at least $(\frac{1}{2} - \eta - 2n^{-1/3}\log^{1/2}{n})|U|$.
\end{definition}

\noindent
We next show that a good sub-matrix with $2 n^{2/3}$ columns exists, by using the probabilistic method.
Specifically, we show that if we randomly pick $2 n^{2/3}$ columns from the
matrix $M[\mathcal W, [n]]$ then with strictly positive probability the
matrix that includes exactly these columns is good.

\begin{lemma}\label{lem:GoodSubmatrix}
	Let $\eta \in (0,\frac{1}{2})$. For large enough~$n$
	the free matrix $M$ has an $\eta$-good sub-matrix with $2n^{2/3}$ columns.
\end{lemma}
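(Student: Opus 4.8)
The plan is to use the probabilistic method: pick a multiset $U$ of $2n^{2/3}$ columns uniformly at random (with or without replacement — sampling with replacement is cleanest) from the index set $[n]$, and show that with strictly positive probability the induced sub-matrix $M[\mathcal{W}, U]$ is $\eta$-good, i.e.\ every one of its $m$ rows has weight at least $(\frac12 - \eta - 2n^{-1/3}\log^{1/2} n)|U|$. Fix a tile $W_i \in \mathcal{W}$. For each sampled column $v$, the entry $M[\{W_i\},\{v\}]$ is an indicator that is $1$ with probability exactly $p_i := (\sum M[\{W_i\},[n]])/n$, the relative row density, and these indicators are independent across the $2n^{2/3}$ samples. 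By Observation~\ref{obs:RowSum} we have $p_i \geq \frac12 - \eta - n^{-1/3}$. The weight of row $W_i$ in $M[\mathcal{W},U]$ is therefore a sum of $|U| = 2n^{2/3}$ independent Bernoulli$(p_i)$ variables with mean $p_i |U|$.

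The key step is a Chernoff/Hoeffding lower-tail bound. I want the row weight to drop below $(p_i - n^{-1/3}\log^{1/2} n)|U|$ — note this suffices, since then the weight is at least $(\frac12 - \eta - n^{-1/3} - n^{-1/3}\log^{1/2} n)|U| \geq (\frac12 - \eta - 2n^{-1/3}\log^{1/2} n)|U|$ for large $n$ (absorbing the lower-order $n^{-1/3}$ term into the $2n^{-1/3}\log^{1/2} n$ slack). The deviation is $t := n^{-1/3}\log^{1/2} n \cdot |U| = 2n^{1/3}\log^{1/2} n$. Hoeffding's inequality gives that the probability row $W_i$ is too light is at most $\exp(-2t^2/|U|) = \exp(-2 \cdot 4 n^{2/3}\log n / (2n^{2/3})) = \exp(-4\log n) = n^{-4}$. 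Taking a union bound over all $m \leq n$ rows, the probability that some row fails is at most $m \cdot n^{-4} \leq n^{-3} < 1$ for large $n$. Hence with probability at least $1 - n^{-3} > 0$ the sampled sub-matrix is $\eta$-good, and by the probabilistic method such a sub-matrix exists. (If sampling without replacement is preferred, one invokes the standard fact that Hoeffding's bound also holds for sampling without replacement from a finite population, or switches to a negative-association argument; this is a routine technicality.)

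The main obstacle — though it is minor — is bookkeeping the error terms: one must check that the $n^{-1/3}$ slack coming from Observation~\ref{obs:RowSum} plus the $n^{-1/3}\log^{1/2} n$ Chernoff slack together stay within the budget $2n^{-1/3}\log^{1/2} n$ allowed by Definition~\ref{def:GoodSubmatrix}, and that the exponent in the Chernoff bound beats the $\log n$ coming from the union bound over rows. Both work out comfortably for large $n$ because $\log^{1/2} n \to \infty$ dominates the constant-factor fudge, and the exponent $4\log n$ in $n^{-4}$ is more than enough to kill the at most $n$ rows. A secondary point worth stating carefully is that sampling columns with repetition is harmless: repeated columns only help, since $|U|$ is counted as a multiset of size exactly $2n^{2/3}$ and the "weight divided by $|U|$" normalization is consistent; alternatively one samples a set of size exactly $2n^{2/3}$ and uses the without-replacement version of the bound.
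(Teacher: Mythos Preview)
Your proposal is correct and follows essentially the same approach as the paper: random column selection, a Hoeffding-type tail bound per row, and a union bound over the at most $n$ rows. The paper samples a \emph{set} of $2n^{2/3}$ columns without replacement and invokes the hypergeometric tail bound directly (obtaining the cruder $e^{-2t^2\cdot 2n^{2/3}}\le 1/n$), which is exactly the without-replacement alternative you mention at the end; your with-replacement framing is fine for the concentration step but note that Definition~\ref{def:GoodSubmatrix} requires $U\subset[n]$ to be a genuine set, so the without-replacement version is the one that matches the statement cleanly.
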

\begin{proof}
	Select $K \subset [n]$ of size $2n^{2/3}$ uniformly at random. Let $M' = M[\mathcal{W},K]$.

	We set $p = 1/2 - \eta - n^{-1/3}$ and $t = n^{-1/3}\log^{1/2}{n}$. By
	Observation~\ref{obs:RowSum}, every row of~$M$ has weight at
	least~$pn$. By the Hypergeometric tail bound the
	probability that a specific row of $M'$ has weight less than $
	(p - t)n$ is at most $e^{-2t^2 2n^{2/3}}\leq 1/n$, where the
	inequality follows from our choice of~$t$. Then by the union bound the
	probability that \emph{every} row of $M'$ has weight at least $(p-t)n$ is
	strictly positive. 

	Note that by our choice of~$p$ and~$t$,  we get that $(p-t)n$
	, for large enough~$n$, is at least as large as 
	$(1/2 - \eta - 2n^{-1/3}\log^{1/2}{n})n$, therefore a good sub-matrix 
	$M'$ exists with strictly positive probability. By the probabilistic
	methods the claim therefore holds.
\end{proof}

\noindent
Next we show that the only way that the adversary does not provide us with a
$(1/2+\delta)$-king once we have identified a $\delta$-good sub-matrix is if
the distribution of $1$'s in that matrix is very restricted. We will use this
additional structure to find a $(1/2+\delta)$-king in the sequel. For
simplicity, we first query all the edges between the vertices associated with the columns of the $\delta$-good sub-matrix, but note that this is not strictly necessary: we can instead inspect all potential partitions (with properties as stated in the lemma) and only if no such partition exists query said edges which then surely identifies a $(1/2+\delta)$-king. With this change the lemma is consistent with the structural claim from the introduction.

\begin{lemma}\label{lemma:MatrixDecomp}
	Let $M' = M[\mathcal W, V]$ be a $\delta$-good sub-matrix of $M$ with $2n^{2/3}$ columns.
	Let further $\kappa + \delta \leq 1/2$.
	%and $n \geq (16/(\kappa+\delta))^3$.
	If we query each one of the $O(n^{4/3})$ edges in $V$, then
	either we find a $(\frac{1}{2}+\delta)$-king, 
	or we find partitions $V_1 \uplus V_2 = V$ and
	$\mathcal W_1 \uplus \mathcal W_2 = \mathcal W$  with the following properties:
	\begin{itemize}
		\item $|V_1| = |V_2| = n^{2/3}$
		\item $|\mathcal W_1| \geq (\frac{1}{2} - \delta - \kappa)n^{1/3} -2 $
		and $|\mathcal W_2| < (\frac{1}{2} + \delta + \kappa)n^{1/3}$
		\item Every row in $M'[\mathcal W_1,V_1]$ has weight at most $\kappa n^{2/3}$
		\item Every row in $M'[\mathcal W_2,V_1]$ has weight at least $\kappa n^{2/3}$
		%  \item Every row in $M'[\mathcal W_1,V_2]$ has at least $(1 - 2\delta - 2\kappa)n^{2/3}$ ones.
	\end{itemize}
\end{lemma}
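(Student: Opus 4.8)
The plan is to spend the entire query budget on the $\binom{|V|}{2} = O(n^{4/3})$ arcs inside $V$ (recall $|V| = 2n^{2/3}$), and then to produce a single candidate vertex $v^*$ that either already certifies a $(\tfrac12+\delta)$-king or is exactly what forces the claimed balanced partition. The guiding idea is that a row $W_i$ of $M'$ having many $1$s among the columns $V_1 \subseteq N^+(v^*)$ means $v^*$ has many out-neighbours in $F(W_i)$, which by Lemma~\ref{lemma:TileControl} makes $v^*$ control almost all of the tile $W_i$; so either there are enough such rows that $v^*$ becomes a king, or there are few, and that scarcity is precisely the cardinality bound we want on $\mathcal W_2$.

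First, after querying all arcs inside $V$ the sub-tournament $\dir T[V]$ is fully revealed, and since it has $2n^{2/3}$ vertices, some vertex has out-degree at least $n^{2/3}$ within $V$; fix such a $v^* \in V$. Choose $V_1 \subseteq N^+(v^*)\cap V$ with $|V_1| = n^{2/3}$ exactly and set $V_2 := V \setminus V_1$, so $|V_1| = |V_2| = n^{2/3}$ as required. This choice of $V_1$ already dictates the row partition: put $\mathcal W_2 := \{\, W_i \in \mathcal W : \sum M'[\{W_i\},V_1] \geq \kappa n^{2/3} \,\}$ and $\mathcal W_1 := \mathcal W \setminus \mathcal W_2$. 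With this definition the two ``weight of every row'' bullets of the conclusion hold by construction (rows of $\mathcal W_1$ restricted to $V_1$ have weight $< \kappa n^{2/3} \le \kappa n^{2/3}$, rows of $\mathcal W_2$ have weight $\ge \kappa n^{2/3}$), so only the two cardinality bounds remain, and those will follow once we bound $|\mathcal W_2|$, using $|\mathcal W_1| + |\mathcal W_2| = m \geq n^{1/3} - 2$.

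The remaining work is the dichotomy on $|\mathcal W_2|$. If $|\mathcal W_2| \geq (\tfrac12 + \delta + \kappa)n^{1/3}$, then $v^*$ is already a $(\tfrac12+\delta)$-king: for each $W_i \in \mathcal W_2$ the set $S_i := V_1 \cap F(W_i)$ satisfies $|S_i| \ge \kappa n^{2/3}$ and $S_i \subseteq F(W_i)$, so Lemma~\ref{lemma:TileControl} gives $|N^+(S_i)\cap W_i| \ge (1-\kappa)n^{2/3}$, and since $S_i \subseteq V_1 \subseteq N^+(v^*)$, every vertex of $N^+(S_i)\cap W_i$ is reached from $v^*$ by a path of length two, hence lies in $N^{++}[v^*]$. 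As the tiles are pairwise disjoint, $|N^{++}[v^*]| \ge |\mathcal W_2|\,(1-\kappa)n^{2/3} \ge (\tfrac12 + \delta + \kappa)(1-\kappa)\,n$, and a one-line computation shows $(\tfrac12 + \delta + \kappa)(1-\kappa) - (\tfrac12 + \delta) = \kappa(\tfrac12 - \delta - \kappa) \ge 0$ precisely because $\kappa + \delta \le \tfrac12$, so $v^*$ controls at least $(\tfrac12+\delta)n$ vertices. Otherwise $|\mathcal W_2| < (\tfrac12 + \delta + \kappa)n^{1/3}$, which is the second cardinality bullet, and then $|\mathcal W_1| = m - |\mathcal W_2| > (n^{1/3} - 2) - (\tfrac12 + \delta + \kappa)n^{1/3} = (\tfrac12 - \delta - \kappa)n^{1/3} - 2$, giving the first.

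I expect the only genuinely delicate aspect to be the calibration of the threshold $\kappa n^{2/3}$ defining $\mathcal W_2$: it must be large enough that membership in $\mathcal W_2$ lets Lemma~\ref{lemma:TileControl} convert a row's weight into control of a $(1-\kappa)$-fraction of the tile, yet small enough that the resulting ``king-or-partition'' trade-off closes — and it is exactly at the inequality $(\tfrac12 + \delta + \kappa)(1-\kappa) \ge \tfrac12 + \delta$ that the hypothesis $\kappa+\delta \le \tfrac12$ is consumed. Everything else — integrality of $n^{2/3}$, the ceiling in the out-degree averaging step, and where to place the rows of weight exactly $\kappa n^{2/3}$ — is routine bookkeeping absorbed into ``for large enough $n$''; note that the $\delta$-goodness of $M'$ is not actually needed for this step, only its column count $2n^{2/3}$ and the numerical hypothesis $\kappa+\delta\le\tfrac12$.
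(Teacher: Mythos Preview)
Your proposal is correct and follows essentially the same argument as the paper: pick a maximum out-degree vertex in the revealed sub-tournament $\dir T[V]$, take $V_1$ inside its out-neighbourhood, split $\mathcal W$ by the $\kappa n^{2/3}$ row-weight threshold on $V_1$, and use Lemma~\ref{lemma:TileControl} plus the identity $(\tfrac12+\delta+\kappa)(1-\kappa)-(\tfrac12+\delta)=\kappa(\tfrac12-\delta-\kappa)$ to close the king case. Your remark that $\delta$-goodness of $M'$ is not actually invoked here is also correct; the paper's proof likewise uses only $|V|=2n^{2/3}$ and the hypothesis $\kappa+\delta\le\tfrac12$.
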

\begin{proof}
	We query all the edges in $V \times V$ and select a vertex $y \in V$ such that $d^+(y,V) \geq n^{2/3}$. 
	Let $V_1$ be an arbitrary subset of $N^+(y) \cap V$ of size
	$n^{2/3}$ and let $V_2 = V\setminus V_1$.
	Partition the rows of $M'$ into
	$\mathcal W_1 \cup \mathcal W_2$ so that $\mathcal W_1$ contains all rows with weight
	less than $\kappa n^{2/3}$ in the sub-matrix $M'[ \mathcal W,V_1]$. 
	
	We claim that if $|\mathcal W_1| < (\frac{1}{2}-\delta-\kappa) n^{1/3} -2$ then $y$ is a $(\frac{1}{2}+\delta)$-king.
	By construction, every row in $\mathcal W_2$ has weight at least $\kappa n^{2/3}$ in $M'[\mathcal W,V_1]$ and if the condition of the claim holds
	then $|\mathcal W_2| \geq (\frac{1}{2}+\delta+\kappa) n^{1/3}$. 
	By Lemma~\ref{lemma:TileControl}, the set $V_1$ must $(1-\kappa)$-cover every tile in $\mathcal W_2$. It follows that 
	\begin{align*}
		|N^{++}[y]| &\geq (1-\kappa)\big|\bigcup \mathcal W_2\big| \geq (1-\kappa) \left(\Big(\frac{1}{2}+\delta+\kappa\Big) n^{1/3}\right) n^{2/3} \\
		&= (1-\kappa) \Big(\frac{1}{2}+\delta+\kappa\Big) n
		= \Big(\frac{1}{2}+\delta+\frac{\kappa}{2}-\kappa\delta-\kappa^2\Big) n \\
		&= \left(\frac{1}{2}+\delta+\kappa\Big(\frac{1}{2}-\delta-\kappa\Big)\right) n \\
		&\geq \Big(\frac{1}{2}+\delta\Big) n 
	\end{align*}
	where the last inequality holds since $\delta+\kappa \leq 1/2$. 
	%  \begin{verify}
		%    \begin{gather*}
			%     \frac{1}{2}+\delta+\frac{\kappa}{2}-\kappa\delta-\kappa^2 -\frac{2}{n^{1/3}}
			%     \stackrel{!}{\geq}
			%     \frac{1}{2}+\frac{3}{4}\delta+\frac{1}{4}\kappa \\
			%     \iff
			%      (1-\kappa)\delta+(\frac{1}{2}-\kappa)\kappa -\frac{2}{n^{1/3}}
			%     \stackrel{!}{\geq}
			%     \frac{3}{4}\delta+\frac{1}{4}\kappa
			%    \end{gather*}
		%    Let us choose $\kappa \leq 1/8$, then the above holds if
		%    \[
		%      \frac{7}{8}\delta+\frac{3}{8}\kappa -\frac{2}{n^{1/3}}
		%     \stackrel{!}{\geq} \frac{3}{4}\delta+\frac{1}{4}\kappa 
		%     \iff
		%     \frac{1}{16}(\delta+\kappa)
		%     \stackrel{!}{\geq} \frac{1}{n^{1/3}}
		%     \iff
		%     \Big(\frac{16}{\delta+\kappa}\Big)^3
		%     \stackrel{!}{\leq} n
		%    \]
		%  \end{verify}
\end{proof}

\noindent
Lemma~\ref{lemma:MatrixDecomp} implies the following about good sub-matrices.

\begin{lemma}\label{lemma:W1V2Dense}
	Let $M' = M[\mathcal W, V]$ be a $\delta$-good sub-matrix with $|V| = 2n^{2/3}$
	and $ \mathcal W_1 \uplus \mathcal W_2$, $ V_1 \uplus V_2$ be partitions
	as in Lemma~\ref{lemma:MatrixDecomp}.
	Then every row in $M'[\mathcal W_1,V_2]$ has weight at least $(1 - 2\delta - 2\kappa)n^{2/3}$.
\end{lemma}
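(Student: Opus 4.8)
The plan is to exploit row-weight additivity across the column partition $V = V_1 \uplus V_2$, combining it with two bounds that are already in hand: the lower bound on each row weight of $M'$ coming from $\delta$-goodness (Definition~\ref{def:GoodSubmatrix}), and the upper bound $\kappa n^{2/3}$ on the $V_1$-part of every $\mathcal{W}_1$-row coming from the third bullet of Lemma~\ref{lemma:MatrixDecomp}.

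First I would fix an arbitrary tile $W_i \in \mathcal{W}_1$ and decompose its row of $M'$ along the column partition: $\sum M'[\{W_i\}, V] = \sum M'[\{W_i\}, V_1] + \sum M'[\{W_i\}, V_2]$, so the quantity to be bounded from below, namely $\sum M'[\{W_i\}, V_2]$, equals $\sum M'[\{W_i\}, V] - \sum M'[\{W_i\}, V_1]$. For the first summand, plugging $|V| = 2n^{2/3}$ into Definition~\ref{def:GoodSubmatrix} gives $\sum M'[\{W_i\}, V] \geq (\tfrac12 - \delta - 2n^{-1/3}\log^{1/2}n)\cdot 2n^{2/3} = (1 - 2\delta - 4n^{-1/3}\log^{1/2}n)\,n^{2/3}$. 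For the second summand, since $W_i \in \mathcal{W}_1$, Lemma~\ref{lemma:MatrixDecomp} gives $\sum M'[\{W_i\}, V_1] \leq \kappa n^{2/3}$. Subtracting the two bounds yields $\sum M'[\{W_i\}, V_2] \geq (1 - 2\delta - \kappa - 4n^{-1/3}\log^{1/2}n)\,n^{2/3}$.

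Finally, since $4n^{-1/3}\log^{1/2}n \to 0$ as $n \to \infty$, for large enough $n$ this error term is at most $\kappa$, so the bound improves to $(1 - 2\delta - 2\kappa)\,n^{2/3}$, which is the claim. As $W_i$ was an arbitrary row of $M'[\mathcal{W}_1, V_2]$, the lemma follows.

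There is essentially no combinatorial obstacle here; the proof is a two-line weight accounting. The only point that requires a moment's care is that the additive slack $2n^{-1/3}\log^{1/2}n$ built into the definition of a good sub-matrix, once scaled by the $2n^{2/3}$ columns of $M'$, contributes a lower-order $O(n^{1/3}\log^{1/2}n)$ term, and one must confirm it is comfortably absorbed by the extra $\kappa n^{2/3}$ we are allotted — this is exactly where the implicit ``for large enough $n$'' of the statement is used, consistently with Lemma~\ref{lem:GoodSubmatrix}.
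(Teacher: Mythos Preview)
Your proof is correct and essentially identical to the paper's: both subtract the Lemma~\ref{lemma:MatrixDecomp} upper bound on the $V_1$-part of each $\mathcal W_1$-row from the $\delta$-goodness lower bound on the full row, and then absorb the $4n^{-1/3}\log^{1/2}n$ term into an extra~$\kappa$ for large enough~$n$.
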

\begin{proof}
	Since $M'$ is a $\delta$-good sub-matrix, by Definition~\ref{def:GoodSubmatrix}, 
	every row in $M'[\mathcal W_1,V]$ has weight at least
	$(1/2 - \delta - 2n^{-1/3}\log^{1/2}{n}) 2n^{2/3}$.
	By Lemma~\ref{lemma:MatrixDecomp}, every row in $M'[\mathcal W_1,V_1]$ has weight  at most $\kappa n^{2/3}$.
	Therefore, the weight of every row in  $M'[\mathcal W_1,V_2]$ is 
	\begin{align*}
		& \ge (1/2 - \delta - 2n^{-1/3}\log^{1/2}{n}) 2n^{2/3} -  \kappa n^{2/3} \\
		&= (1 - 2\delta -4 \frac{\log^{1/2}{n}}{n^{1/3}} -\kappa) n^{2/3} \\
		&\geq (1 - 2\delta - 2\kappa) n^{2/3}
	\end{align*}
	where we assume that $n$ is large enough so that $4 \frac{\log^{1/2}{n}}{n^{1/3}} \leq \kappa$.	
\end{proof}

\noindent
Our final technical lemma lets us, for a given set of rows of $M$, identify a set of columns with high enough weight when restricted to those rows.

\begin{lemma}\label{lemma:V3}
	Let $U\subset [n]$ be of size $2n^{2/3}$ and $\mathcal{W'} \subset \mathcal{W}$.
	Then there exists a set $V'\subset [n]\setminus U$, 
	of size $n^{2/3}$ such that every column in  $M[\mathcal{W'},V']$
	has weight at least $(1/2 -\delta - 3n^{-1/3})|\mathcal{W'}|$.
\end{lemma}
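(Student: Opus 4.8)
The plan is to argue by a simple averaging (Markov-type) estimate on the ``non-free'' incidences recorded by the rows of $M$ restricted to $\mathcal W'$, and then to discard the at most $2n^{2/3}$ columns lying in $U$. (The case $\mathcal W' = \emptyset$ is trivial, so assume $\mathcal W' \neq \emptyset$.)

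First I would recall that the tiling underlying $M$ is $\delta$-weak, so Observation~\ref{obs:RowSum} says that every row $M[\{W_i\},[n]]$ with $W_i \in \mathcal W'$ has weight at least $(\tfrac12-\delta-n^{-1/3})n$; equivalently, for each such tile the number of columns $v$ with $v\notin F(W_i)$ is at most $(\tfrac12+\delta+n^{-1/3})n$. Summing over the $|\mathcal W'|$ rows, the number of pairs $(W_i,v)$ with $W_i\in\mathcal W'$ and $v\notin F(W_i)$ is at most $(\tfrac12+\delta+n^{-1/3})\,n\,|\mathcal W'|$. Now call a column $v$ \emph{bad} if its weight in $M[\mathcal W',\{v\}]$ is less than $(\tfrac12-\delta-3n^{-1/3})|\mathcal W'|$, that is, if $v$ occurs in strictly more than $(\tfrac12+\delta+3n^{-1/3})|\mathcal W'|$ of the pairs counted above. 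By Markov's inequality the number of bad columns is at most
\[
  \frac{(\tfrac12+\delta+n^{-1/3})\,n}{\tfrac12+\delta+3n^{-1/3}}
  \;=\; n-\frac{2n^{2/3}}{\tfrac12+\delta+3n^{-1/3}},
\]
so at least $\tfrac{2n^{2/3}}{\,\tfrac12+\delta+3n^{-1/3}\,}$ columns of $[n]$ are good. Discarding the at most $2n^{2/3}$ columns that lie in $U$ leaves at least
\[
  \frac{2n^{2/3}}{\tfrac12+\delta+3n^{-1/3}}-2n^{2/3}
  \;=\; 2n^{2/3}\cdot\frac{\tfrac12-\delta-3n^{-1/3}}{\tfrac12+\delta+3n^{-1/3}}
\]
good columns inside $[n]\setminus U$; taking $V'$ to be any $n^{2/3}$ of them finishes the proof, provided this last quantity is at least $n^{2/3}$.

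The one place that needs care is exactly that final inequality. It rearranges to $\tfrac12-3\delta\ge 9n^{-1/3}$, so it holds once $\delta$ is a sufficiently small constant --- which it is in our setting, $\delta=\tfrac{2}{17}<\tfrac16$ --- and $n$ is large enough to absorb the $n^{-1/3}$ error terms. Intuitively, discarding $U$ can cost up to $2n^{2/3}$ candidate columns, so we must start from a pool of more than $3n^{2/3}$ good columns, and the gap between the $n^{-1/3}$ appearing in Observation~\ref{obs:RowSum} and the $3n^{-1/3}$ in the definition of ``good'' is precisely the slack the Markov step consumes. Everything else is routine arithmetic.
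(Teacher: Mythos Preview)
Your proof is correct. Both your argument and the paper's are averaging arguments built on Observation~\ref{obs:RowSum}, and both ultimately rely on the same hidden constraint $\delta < \tfrac16 - O(n^{-1/3})$; the difference is only in packaging. The paper works directly inside $\bar U = [n]\setminus U$: it takes $V'$ to be the $n^{2/3}$ heaviest columns of $M[\mathcal W',\bar U]$, then sandwiches $\sum M[\mathcal W',\bar U]$ between a lower bound (row weights minus the trivial contribution of $U$) and an upper bound ($n^{2/3}|\mathcal W'|$ for $V'$ plus $t(n-3n^{2/3})$ for the rest) to force the minimum column weight $t$ above the threshold. You instead count zeros over all of $[n]$, apply Markov to bound the number of ``bad'' columns globally, and only then discard the $2n^{2/3}$ columns in $U$. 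Your route is arguably a bit more transparent and makes the $\delta<\tfrac16$ requirement explicit, whereas the paper's top-$k$ selection avoids having to worry about whether the good columns might all land in $U$; but the two computations are essentially dual and yield the same bound.
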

\begin{proof}
	Let $\bar U := [n] \setminus U$ and
	let $V'$ be an arbitrary subset of $n^{2/3}$ columns in $M
	[\mathcal W',\bar U]$ with the largest column-weight. Let
	$t$ be the smallest weight among these columns when restricted to $M
	[\mathcal W',V']$. We bound the weight of $M[\mathcal W',\bar U]$
	first from below and then from above, then we use
	these bounds to show that $t > (\frac{1}{2} - \delta - 3n^
	{-1/3}) \cdot |\mathcal{W'}|$, which implies that $V'$ is the claimed set.
	
	For the lower bound on the weight of $M[\mathcal W', \bar U]$, we use the simple fact that
	\begin{align}\label{eq:forlower}
		\sum M[\mathcal W', \bar U] = \sum M[\mathcal W', [n]] - \sum M[\mathcal W',U].
	\end{align}

	\noindent
	By Observation~\ref{obs:RowSum}, every row of the matrix $M$ has
	weight least $(\frac{1}{2} - \delta - n^{-1/3}) \cdot n$. It
	follows that $\sum M[\mathcal W', [n]]$ is at least $(\frac{1}
	{2} - \delta - n^{-1/3}) \cdot n \cdot |\mathcal{W}'|$. For the second
	term, we have the trivial bound $\sum M[\mathcal W',U] \leq |U| \cdot |\mathcal W'| = 2n^{2/3}\cdot |\mathcal{W}'|$. 
	Plugging these values into (\ref{eq:forlower}) we obtain
	\begin{equation}\label{eq:oneslower}
		\begin{aligned}
			\sum M[\mathcal W', \bar U] &\ge (\frac{1}{2} - \delta - n^{-1/3}) \cdot n \cdot |\mathcal{W'}| - 2n^{2/3}\cdot |\mathcal{W'}| \\
			&= (\frac{1}{2} - \delta - 3n^{-1/3}) \cdot n \cdot |\mathcal{W'}|.
		\end{aligned}
	\end{equation}
	
	\noindent
	For the upper bound on the total weight of $M[\mathcal W', \bar U]$ we use that
	\begin{align}\label{eq:forupper}
		\sum M[\mathcal W', \bar U] = \sum M[\mathcal W',V'] + \sum M[\mathcal W', \bar U \setminus V'].
	\end{align}
	We use the trivial bound $\sum M[\mathcal W',V'] \leq |V'| \cdot |\mathcal W'| = n^{2/3} \cdot |\mathcal W'|$ for the first term.
	By definition of the value~$t$, we have that every column in $M[\mathcal W',\bar U \setminus V']$ has weight at most~$t$.
	Accordingly, $\sum M[\mathcal W', \bar U \setminus V'] \leq t \cdot |\bar U \setminus V'|  = t \cdot (n - 3n^{2/3})$.
	Plugging in these values into (\ref{eq:forupper}) we obtain
	\begin{equation}\label{eq:onesupper}
		\begin{aligned}
			\sum M[\mathcal W', \bar U] &\le  n^{2/3} \cdot |\mathcal W'| + t \cdot (n- 3n^{2/3}).
		\end{aligned}
	\end{equation}
	
	\noindent
	Finally, (\ref{eq:oneslower}) and (\ref{eq:onesupper}) taken together give us that
	\[
		t \cdot (n - 3n^{2/3}) + n^{2/3} \cdot |\mathcal{W}'| \geq (\frac{1}{2} - \delta - 2n^{-1/3}) \cdot n \cdot |\mathcal{W'}|.
	\]
	Consequently, $t > (\frac{1}{2} - \delta - 3n^{-1/3}) \cdot |\mathcal{W'}|$ and we conclude that $V'$ has the claimed property.
\end{proof}

\noindent
We are finally ready to prove our seeker-strategy for finding a $1/2 + \delta$-king  using $\tilde O (n^{4/3})$ queries.
For readability, we will state our main result in terms of concrete and simple values for~$\kappa$ and
$\delta$, however, note that smaller values of~$\kappa$ allow $\delta$ to be slightly larger than the stated bound of $\frac{2}{17}$. 

\begin{theorem}\label{thm:Final}
	Fix $\delta = \frac{2}{17}$, let $\kappa = \frac{1}{4000}$. For 
	larger enough~$n$, there exists a seeker strategy for finding a $(1/2 + \delta)$-king using $\tilde O(n^{4/3})$ edge queries.
\end{theorem}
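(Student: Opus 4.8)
The plan is to carry out precisely the two-stage seeker strategy described in the technical overview, with all parameters instantiated as $\eta = \delta = \frac{2}{17}$ and $\kappa = \frac{1}{4000}$, and to follow the strategy through its case analysis.

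\emph{Stage 1 and the first dichotomy.} First I fix, via Theorem~\ref{thm:KappaTemplateExists}, a $\kappa$-template-graph $G_\kappa$ on $[n]$ with $O(n^{4/3}\log n/\kappa) = \tilde O(n^{4/3})$ edges and let the seeker query the orientation of every one of its edges; this is non-adaptive and costs $\tilde O(n^{4/3})$ queries, fixing an oriented template $\dir G_\kappa$. By Lemma~\ref{lem:UltraSetOrTiling}, for $n$ large either $\dir G_\kappa$ contains a $\delta$-ultra set $H$ with $|H| = 2n^{2/3}$ --- in which case Observation~\ref{obs:DeltaUltra} finishes the proof with $|H|^2 = O(n^{4/3})$ further queries --- or $\dir G_\kappa$ admits a $\delta$-weak tiling $W_1,\dots,W_m,R$, which is the interesting case.

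\emph{Stage 2: matrix decomposition.} Form the free matrix $M$ of the tiling and, by Lemma~\ref{lem:GoodSubmatrix}, fix a $\delta$-good sub-matrix $M' = M[\mathcal W, V]$ with $|V| = 2n^{2/3}$. Since $\kappa + \delta \le \frac{1}{2}$, Lemma~\ref{lemma:MatrixDecomp} applies once the seeker queries the $O(n^{4/3})$ edges inside $V$: either a $(\frac{1}{2}+\delta)$-king is already revealed, or we obtain partitions $V_1\uplus V_2 = V$ and $\mathcal W_1\uplus\mathcal W_2 = \mathcal W$ with the listed properties, and by Lemma~\ref{lemma:W1V2Dense} every row of $M'[\mathcal W_1,V_2]$ has weight $\ge (1-2\delta-2\kappa)n^{2/3}$. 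Two observations then fall out. First, the vertex $y\in V$ with $V_1\subseteq N^+(y)$ from the proof of Lemma~\ref{lemma:MatrixDecomp} controls all of $\mathcal W_2$: for each $W_i\in\mathcal W_2$ the set $V_1\cap F(W_i)\subseteq N^+(y)$ has size $\ge\kappa n^{2/3}$, so it $(1-\kappa)$-covers $W_i$ by Lemma~\ref{lemma:TileControl}, giving $|N^{++}[y]| \ge (1-\kappa)|\mathcal W_2|n^{2/3}$. Second, any vertex $v$ with $|N^+(v)\cap V_2|\ge(2\delta+3\kappa)n^{2/3}$ controls all of $\mathcal W_1$, since for $W_i\in\mathcal W_1$ we have $|N^+(v)\cap V_2\cap F(W_i)| \ge (2\delta+3\kappa)n^{2/3} - (2\delta+2\kappa)n^{2/3} = \kappa n^{2/3}$; and such a $v$ exists by Lemma~\ref{lem:TournamentBipartition} applied to the already-revealed tournament $\dir T[V]$ with bipartition $V_1, V_2$, which yields a vertex with $n^{2/3}/4 \ge (2\delta+3\kappa)n^{2/3}$ out-neighbours in $V_2$ (here $2\delta+3\kappa < \frac{1}{4}$ is used).

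\emph{Stage 2: the endgame and the value $\frac{2}{17}$.} In the regime $|\mathcal W_1|\approx|\mathcal W_2|\approx\frac{1}{2}n^{1/3}$, neither of the two vertices above controls a $(\frac{1}{2}+\delta)$-fraction on its own, so I would look for a single vertex controlling all of $\mathcal W_1$ \emph{and} a constant fraction of $\mathcal W_2$. For the $\mathcal W_2$ part I would apply Lemma~\ref{lemma:V3} with $\mathcal W' = \mathcal W_2$ and $U = V$ to obtain $V'\subseteq[n]\setminus V$, $|V'| = n^{2/3}$, with every column of $M[\mathcal W_2,V']$ of weight $\ge(\frac{1}{2}-\delta-3n^{-1/3})|\mathcal W_2|$; double counting then exhibits at least $(\frac{1}{2}-\delta-o(1))|\mathcal W_2|$ tiles of $\mathcal W_2$ whose free intersection with $V'$ is large enough for Lemma~\ref{lemma:TileControl}. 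After querying the $O(n^{4/3})$ edges inside $V\cup V'$, the seeker selects a vertex $v^\star$ that has enough out-neighbours in $V_2$ (so, by the second observation, it controls all of $\mathcal W_1$) and dominates enough of $V'$ (so it $(1-\kappa)$-covers that sub-family of $\mathcal W_2$); then $v^\star$ controls at least
\[
  (1-\kappa)\,n^{2/3}\Bigl(|\mathcal W_1| + \bigl(\tfrac{1}{2}-\delta-o(1)\bigr)|\mathcal W_2|\Bigr)
\]
vertices. Using $|\mathcal W_1|\ge(\frac{1}{2}-\delta-\kappa)n^{1/3}-2$ and $|\mathcal W_1|+|\mathcal W_2| = m \ge n^{1/3}-2$, the bracket is minimized when $|\mathcal W_1|$ is smallest, and the whole quantity is then $(1-\kappa)\bigl((\frac{1}{2}-\delta)(\frac{3}{2}+\delta) - o(1)\bigr)n$. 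This is at least $(\frac{1}{2}+\delta)n$ exactly when $(\frac{1}{2}-\delta)(\frac{3}{2}+\delta)\ge\frac{1}{2}+\delta$, i.e.\ $\delta^2 + 2\delta \le \frac{1}{4}$, i.e.\ $\delta \le \frac{\sqrt{5}-2}{2}\approx 0.11803$. Since $\frac{2}{17}\approx 0.11765$ sits just below this bound, there is a small positive slack ($\delta^2+2\delta = \frac{72}{289} = \frac{288}{1156} < \frac{289}{1156} = \frac{1}{4}$), and it is precisely this slack that pins down how small $\kappa$ must be; $\kappa = \frac{1}{4000}$ comfortably absorbs the accumulated $O(\kappa)$ and $o(1)$ terms, and smaller $\kappa$ would permit $\delta$ slightly above $\frac{2}{17}$. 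Finally, the three query batches --- the template edges, the edges inside $V$, and the edges inside $V\cup V'$ --- each cost $\tilde O(n^{4/3})$, so the total is $\tilde O(n^{4/3})$, and following the case split the seeker always ends up holding a $(\frac{1}{2}+\delta)$-king.

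\emph{Anticipated main obstacle.} The counting estimates and the query accounting are routine; the delicate step is producing the single endgame vertex $v^\star$. Controlling $\mathcal W_1$ is cheap because $V_2$ is \emph{densely} free for $\mathcal W_1$, so a modest out-degree into $V_2$ suffices; but controlling tiles of $\mathcal W_2$ is expensive, since the only free sets for $\mathcal W_2$ available a priori --- $V_1$ and the columns extracted by Lemma~\ref{lemma:V3} --- barely exceed the $\kappa n^{2/3}$ threshold of Lemma~\ref{lemma:TileControl}, forcing the controlling vertex to dominate almost all of them. Reconciling both demands in one vertex, while keeping the controlled fraction above $\frac{1}{2}+\delta$ for $\delta$ as large as $\frac{2}{17}$, is where I expect the argument --- and the exact choice of constants --- to require the most care.
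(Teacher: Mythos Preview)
Your plan is essentially the paper's own proof: same template-graph stage, same ultra-set/tiling dichotomy, same good sub-matrix, same application of Lemma~\ref{lemma:MatrixDecomp} and Lemma~\ref{lemma:V3} with $\mathcal W'=\mathcal W_2$, and the same final arithmetic $(\frac12-\delta)(\frac32+\delta)\ge\frac12+\delta$ pinning $\delta$ just below $(\sqrt5-2)/2$. The query accounting and the numerics are right.

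There is, however, one genuine execution gap in your endgame. You first double-count on $M[\mathcal W_2,V']$ to extract a sub-family $\mathcal W_2^\ast$ of tiles with $|V'\cap F(W)|\ge\kappa n^{2/3}$, and only afterwards pick $v^\star$ that ``dominates enough of $V'$'', concluding that $v^\star$ then $(1-\kappa)$-covers every tile in $\mathcal W_2^\ast$. That implication fails: a tile $W\in\mathcal W_2^\ast$ may have $|V'\cap F(W)|$ barely equal to $\kappa n^{2/3}$, and a vertex $v^\star$ with $d^+(v^\star,V')=n^{2/3}/4$ can miss that entire sliver, so $|N^+(v^\star)\cap F(W)|$ need not reach $\kappa n^{2/3}$ and Lemma~\ref{lemma:TileControl} does not apply. (Your ``anticipated obstacle'' paragraph senses this, but misdiagnoses it as the free sets being too thin; the real issue is the order of quantifiers.)

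The fix---and this is exactly what the paper does---is to swap the two steps. First apply Lemma~\ref{lem:TournamentBipartition} to the bipartition $(V_2,V')$ of the fully revealed tournament $\dir T[V_2\cup V']$ to obtain a single vertex $v$ with $d^+(v,V_2)\ge n^{2/3}/4$ and $d^+(v,V')\ge n^{2/3}/4$; set $V_2'=N^+(v)\cap V_2$ and $V_3'=N^+(v)\cap V'$. Only then double-count, but on $M[\mathcal W_2,V_3']$: the crucial point is that the guarantee from Lemma~\ref{lemma:V3} is a \emph{per-column} lower bound, so it survives restriction to the subset $V_3'\subseteq V'$, and the row-counting now legitimately yields roughly $(\frac12-\delta)|\mathcal W_2|$ tiles $W$ with $|V_3'\cap F(W)|\ge\kappa n^{2/3}$, each of which $v$ genuinely controls. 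With this reordering your bracket $|\mathcal W_1|+(\frac12-\delta-o(1))|\mathcal W_2|$ and the rest of your computation go through unchanged.
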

\begin{proof}
	% Note: number of queries will depend on \epsilon
	We construct the template-graph $G_\kappa$ and query all $\tilde O(n^{4/3})$ of its edges to obtain $\dir G_\kappa$.

	By Lemma~\ref{lem:UltraSetOrTiling}, we either obtain a $\delta$-ultra set of size $2 \cdot n^{2/3}$ or a $\delta$-weak tiling of $\dir {G_\kappa}$. If we find the former, by Observation~\ref{obs:DeltaUltra} we can find a $(\frac{1}{2}+\delta)$-king using
	$O(n^{4/3})$ additional queries. Therefore assume that we obtained
	a $\delta$-weak tiling $\mathcal W,R$ of $\dir G_\kappa$.

	Let~$M$ be the free matrix of $\mathcal W,R$. By Lemma~\ref{lem:GoodSubmatrix}, $M$ has a $\delta$-good sub-matrix $M[\mathcal W, V]$ with $|V| = 2n^{2/3}$. We query all $O(n^{4/3})$ edges in
	$V \times V$ and by Lemma~\ref{lemma:MatrixDecomp} either identify
	a $(\frac{1}{2}+\delta)$-king, or obtain partitions 
	$V_1\uplus V_2 = V$, $\mathcal W_1 \uplus \mathcal W_2$ with properties as listed in Lemma~\ref{lemma:MatrixDecomp}. 
	Importantly, by Lemma~\ref{lemma:W1V2Dense}, every row in the sub-matrix $M[\mathcal W_1, V_2]$ has weight at least $(1-2\delta-2\kappa)n^{2/3}$.

	We now apply Lemma~\ref{lemma:V3} with $\mathcal W' = \mathcal W_2$
	and find a set of columns $V_3 \subseteq [n] \setminus V$ of size $n^{2/3}$ such that every column in $M[\mathcal W_2, V_3]$ has weight
	at least $(\frac{1}{2} - \delta -3n^{-1/3})|\mathcal W_2|$.
	We now query all edges in $V_3\times V_3$ and $V_2 \times V_3$, 
	since $|V_2| = |V_3| = n^{2/3}$ this amounts to $O(n^{4/3})$ additional queries.
	
	Since $\dir G[V_2 \cup V_3]$ is completely revealed, it is a tournament
	of size $2n^{2/3}$ and we apply Lemma~\ref{lem:TournamentBipartition}
	using the bipartition $(V_2, V_3)$ to find a vertex $v \in V_2 \cup V_3$ such that $\deg^+(v,V_2)$ and $\deg^+(v,V_3)$ are
	both at least $n^{2/3}/4$. We claim that~$v$ is a $(\frac{1}{2}+\delta)$-king.
	Let in the following $V'_2 = N^+(v) \cap V_2$ and $V'_3 = N^+(v) \cap V_3$. We first prove the following two claims about these two sets:
	
	\begin{claim}
		Every row in $M[\mathcal W_1, V'_2]$ has weight at
		least $\kappa n^{2/3}$.
	\end{claim}
	\begin{proof}[Proof of the claim]
		According to Lemma~\ref{lemma:W1V2Dense}, every row in $M[\mathcal W_1,V_2]$ has weight at least $(1-2\delta-2\kappa) n^{2/3}$. Since $|V'_2| = |V_2|/4 = n^{2/3}/4$, we have that each row
		in $M[\mathcal W_1, V'_2]$ has weight at least
		\[
			(1-2\delta-2\kappa) n^{2/3} - \frac{3}{4} n^{2/3}
		\]
		which is larger than $\kappa n^{2/3}$ for $\delta \leq \frac{1}{8} - \frac{3 \kappa}{2}$ which holds true for our choices of~$\delta$
		and~$\kappa$.
	\end{proof}

	\begin{claim}
		At least $(\frac{1}{2} - \delta -4\kappa - 3n^{-1/3}) \frac{|\mathcal W_2|}{1-4\kappa}$ rows in $M[\mathcal W_2, V'_3]$ have weight at
		least $\kappa n^{2/3}$.
	\end{claim}
	\begin{proof}[Proof of the claim]
		Recall that by choice of $V_3$, every column in $M[\mathcal W_2, V_3]$ and therefore also $M[\mathcal W_2, V'_3]$ has weight at least $(\frac{1}{2} - \delta - 3n^{-1/3})|\mathcal W_2|$.
		Accordingly, 
		\begin{align}
			\begin{aligned}\label{eq:W2V3}
			\sum M[\mathcal W_2, V'_3] & \geq (\frac{1}{2} - \delta - 3n^{-1/3})|\mathcal W_2| \cdot |V'_3|  \\
			& \geq (\frac{1}{2} - \delta - 3n^{-1/3})|\mathcal W_2| \cdot \frac{1}{4} n^{2/3}. \\
			\end{aligned}
		\end{align} 
		Let~$t$ denote the number of rows in $M[\mathcal W_2, V'_3]$ with weight at least $\kappa n^{2/3}$. Our goal is to find a lower bound for~$t$. Since~$t$ is minimized if every row that
		has weigth at least~$\kappa n^{2/3}$ has in fact the maximum possible weight $|V'_3| = n^{2/3}/4$, we can lower-bound~$t$
		using 
		\begin{align*}
			\frac{t}{4} n^{2/3} + (|\mathcal W_2|-t) \kappa n^{2/3} \geq \sum M[\mathcal W_2, V'_3].
		\end{align*}
		Combining this inequality with~(\ref{eq:W2V3}), we obtain
	\begin{align*}
		& t\frac{n^{2/3}}{4}  + (|\mathcal W_2|-t) \kappa n^{2/3} \geq (\frac{1}{2} - \delta - 3n^{-1/3})|\mathcal W_2| \cdot \frac{1}{4} n^{2/3} \\
		\iff{}& t (1 -  4\kappa) \geq (\frac{1}{2} - \delta -4\kappa - 3n^{-1/3})|\mathcal W_2|  \\
		\iff{}& t \geq (\frac{1}{2} - \delta - 4\kappa - 3n^{-1/3}) \frac{|\mathcal W_2|}{1-4\kappa}\qedhere
	\end{align*}
	\end{proof}

	\noindent
	Now note that for every tile~$W \in \mathcal W$ for which the
	row~$M[\{W\}, V'_2 \cup V'_3]$ has weight at least~$\kappa n^{2/3}$
	we have that~$d^+(v,F(W)) \geq \kappa n^{2/3}$, therefore by Lemma~\ref{lemma:TileControl} the set $N^+(v) \cap F(W)$ $(1-\kappa)$-covers~$W$. 
	In other words, $v$ controls at least~$(1-\kappa) n^{2/3}$ vertices in~$W$.

	Our goal is now
	to lower-bound the total number of such tiles, hence
	let~$s$ denote the total number of rows in~$M[\mathcal W, V'_2 \cup V'_3]$ with weight
	at least~$\kappa n^{2/3}$. By the previous two observations and by plugging in the concrete values of $\delta = \frac{2}{17}$ and~$\kappa = \frac{1}{4000}$, we have that
	\begin{align*}
		s &\geq |\mathcal W_1| + (\frac{1}{2} - \delta - 4\kappa - 3n^{-1/3}) \frac{|\mathcal W_2|}{1-4\kappa} \\
		&= |\mathcal W_1| + (\frac{6483}{17000} - 3n^{-1/3}) |\mathcal W_2|\frac{1000}{999}
	\end{align*}
	Again we are aiming to prove a lower-bound, thus we assume that $\mathcal W_1$ is as small as possible. By Lemma~\ref{lemma:MatrixDecomp}, this means that 
	\begin{align*}
		|\mathcal W_1| &= (\frac{1}{2} - \delta - \kappa)n^{1/3} - 2
					   = \frac{25983}{68000} n^{1/3} - 2 \quad \text{and} \\
		|\mathcal W_2| &= (\frac{1}{2}+\delta+\kappa) n^{1/3} = \frac{42017}{68000} n^{1/3}.
	\end{align*}

	\noindent
	Plugging in the sizes of $\mathcal W_1, \mathcal W_2$ we obtain
	\begin{align*}	
		s &\geq |\mathcal W_1| + (\frac{6483}{17000} - 3n^{-1/3}) |\mathcal W_2| \frac{1000}{999} \\
	  &\geq \frac{25983}{68000} n^{1/3} + (\frac{6483}{17000} - 3n^{-1/3}) \frac{42017}{67932} n^{1/3} - 2 \\
	  &\geq \frac{475777}{769896} n^{1/3} - 4.
	\end{align*}
	Since~$v$ controls a $(1-\kappa) = \frac{3999}{4000}$ fraction of each tile counted by~$s$ and each tile has a size of~$n^{2/3}$,
	we finally have the following lower bound on the second out-neighbourhood of~$v$:
	\begin{align*}	
	  |N^{++}[v]| \geq \frac{3999}{4000} s n^{2/3} 
	  &\geq  \frac{3999}{4000} \cdot \frac{475777}{769896} n - 4n^{2/3} \\
	  &= 0.617\textcolor{amaranth}{82}\ldots n - 4n^{2/3}.
	\end{align*}
	This value lies, for large enough~$n$, above our target value of 
	$(\frac{1}{2} + \delta) n = 0.617\textcolor{amaranth}{64}\ldots n$.	       	  		
\end{proof}

	% \sweepline
	% Simplify by letting~$c_\kappa := \frac{1-\kappa}{1-4\kappa}$. Note that
	% $c_\kappa > 1$ for small enough~$\kappa$ and the we can let is be as close to~$1$ as we want.
	% \begin{gather*}
	% 	(1-\kappa) \left(\frac{1}{2} - \delta - \kappa
	%   		 + \frac{\frac{1}{4}+\frac{\kappa}{2}-\delta^2-\delta\kappa}{1-4\kappa} \right) \mustgeq \frac{1}{2} + \delta \\
	% 	(1-\kappa) \frac{1}{2} - (1-\kappa) \delta - (1-\kappa) \kappa
	%   		 + c_\kappa(\frac{1}{4}+\frac{\kappa}{2}-\delta^2-\delta\kappa) \mustgeq \frac{1}{2} + \delta \\	 
 %  		\frac{1}{4}c_\kappa-\frac{1}{2}\kappa+c_\kappa \frac{\kappa}{2} \mustgeq (2+c_\kappa \kappa-\kappa) \delta + (1-\kappa) \kappa
	% 		+ c_\kappa \delta^2 
	%   		 \\	  		  		 
	% \end{gather*}	

\section{Conclusion}\label{sec:conclusion}

We have shown how the usage of a \emph{template-graph} helped us devise a
seeker strategy that reveals a $(\frac{1}{2}+\frac{2}{17})$-king in a
tournament using $\tilde O(n^{4/3})$ queries, shedding light on a long-standing open
problem. Our approach begins with a
non-adaptive querying strategy based on what we called a \emph{template
graph}, which then helps to guide the seeker to identify a small set of queries which necessarily lead to the discovery of a $(\frac{1}{2}+\frac{2}{17})$-king.

Naturally, we ask whether it is possible to find an improved strategy
which reveals a $(\frac{1}{2}+\delta)$-king with $\delta$ substantially
larger than~$\frac{2}{17}$ using a similar amount of queries. 

\bibliographystyle{plainurl}
\bibliography{main}
	
\end{document}